\documentclass[12pt,dvips,a4paper]{article}

\usepackage{graphicx}
\usepackage{amssymb, latexsym, amsfonts, amsmath, lscape, amscd,
color, epsfig}
\usepackage{amsthm}
\usepackage{floatflt}

\newtheorem{theorem}{Theorem}[section]

\newtheorem{lemma}[theorem]{Lemma}
\newtheorem{corollary}[theorem]{Corollary}
\newtheorem{observation}[theorem]{Observation}
\newtheorem{conjecture}[theorem]{Conjecture}

\textwidth=16cm \oddsidemargin=0in \evensidemargin=-0.5in

\begin{document}
\title{Signed graphs with two negative edges}

\author{Edita Rollov\'a\thanks{European Centre of Excellence, New Technologies for the Information Society, Faculty of Applied Sciences, University of West Bohemia, Technick\'a 8, 30614, Plze\v n, Czech Republic; rollova@ntis.zcu.cz}, Michael Schubert\thanks{Fellow of the International Graduate School "Dynamic Intelligent Systems"},  Eckhard Steffen\thanks{
		Paderborn Institute for Advanced Studies in
		Computer Science and Engineering,
		Paderborn University,
		Warburger Str. 100,
		33102 Paderborn,
		Germany;			
		mischub@upb.de; es@upb.de}}

\maketitle

\abstract{
The presented paper studies the flow number $F(G,\sigma)$ of flow-admissible signed graphs $(G,\sigma)$ with two negative edges. We restrict our study to cubic graphs, because for each non-cubic signed graph $(G,\sigma)$ there is a set ${\cal G}(G,\sigma)$ of cubic graphs such that  $F(G, \sigma) \leq \min \{F(H,\sigma_H) : (H,\sigma_H) \in {\cal G}(G)\}$. 
We prove that $F(G,\sigma) \leq 6$ if $(G,\sigma)$ contains a bridge and $F(G,\sigma) \leq 7$ in general. We prove better bounds, if there is an element $(H,\sigma_H)$ of ${\cal G}(G,\sigma)$ which satisfies some additional conditions. In
particular, if $H$ is bipartite, then $F(G,\sigma) \leq 4$ and the bound is tight. If $H$ is 3-edge-colorable or critical or if it has a sufficient cyclic edge-connectivity, then $F(G,\sigma) \leq 6$. Furthermore, if Tutte's 5-Flow Conjecture is true, then $(G,\sigma)$ admits a nowhere-zero 6-flow  endowed with some strong properties.}

\section{Introduction}
In 1954 Tutte stated a conjecture that every bridgeless graph admits a nowhere-zero 5-flow (\textit{5-flow conjecture}, see \cite{Tutte_1954}). Naturally, the concept of nowhere-zero flows has been extended in several ways. In this paper we study one generalization of them -- nowhere-zero flows on signed graphs. Signed graphs are graphs with signs on edges. It was conjectured by Bouchet \cite{Bouchet_1983} that signed graphs that admit a nowhere-zero flow have a nowhere-zero 6-flow.
Recently, it was announced by DeVos \cite{DeVos} that such signed graphs admit a nowhere-zero 12-flow, which is the best current general approach to Bouchet's conjecture.

Bouchet's conjecture has been confirmed for particular classes of graphs \cite{MS,MR,SS} and also for signed graphs with restricted edge-connectivity (for example \cite{Raspaud_Zhu_2011}). By Seymour \cite{Sey} it is also true for signed graphs with all edges positive, because they correspond to the unsigned case.

In this paper we study signed graphs with two negative edges. It is the minimum number of negative edges for which Bouchet's conjecture is open, because signed graphs with one negative edge are not flow-admissible. This class of signed graphs is further interesting for its connection with Tutte's 5-flow conjecture.
Suppose there exists $k$ such that every signed graph with $k$ negative edges admits a nowhere-zero 5-flow. Take any bridgeless graph $G$ and identify a vertex of all-positive $G$ with a vertex of a flow-admissible signed graph with $k$ negative edges. The resulting signed graph is flow-admissible with $k$ negative edges, so it admits a nowhere-zero 5-flow, as well as all-positive $G$, and hence also $G$.
Therefore the following holds. 

\begin{observation}
If there exists $k$ such that every flow-admissible signed graph with $k$ negative edges admits a nowhere-zero 5-flow, then Tutte's conjecture is true.
\end{observation}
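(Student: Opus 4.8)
The plan is a direct reduction: assuming the hypothesis, I would show that an arbitrary bridgeless graph $G$ admits a nowhere-zero $5$-flow, which is precisely Tutte's conjecture. The mechanism is to glue all $k$ negative edges onto an all-positive copy of $G$ through a single cut vertex, thereby producing a flow-admissible signed graph with exactly $k$ negative edges, apply the hypothesis to it, and then restrict the resulting $5$-flow back to $G$.

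Concretely, I would first fix a flow-admissible signed graph $(K,\sigma_K)$ with exactly $k$ negative edges; such a gadget exists in the relevant range $k\ge 2$ (for $k=0$ the hypothesis is already Tutte's conjecture, and for $k=1$ no flow-admissible signed graph exists). Given a bridgeless $G$, I would view it as the all-positive signed graph $(G,+)$ and identify one of its vertices with a vertex $v$ of $(K,\sigma_K)$, so that the two parts meet only in $v$. The resulting signed graph $(G',\sigma')$ then has exactly $k$ negative edges, all inherited from $K$, since every edge of $G$ is positive.

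Next I would observe that $(G',\sigma')$ is flow-admissible: a nowhere-zero flow can be pieced together from one on all-positive $G$ (which exists since $G$ is bridgeless, via the correspondence between all-positive signed flows and ordinary flows) and one on $(K,\sigma_K)$, with conservation at $v$ holding because it holds in each part separately. The hypothesis then supplies a nowhere-zero $5$-flow $\phi$ on $(G',\sigma')$. Restricting $\phi$ to $E(G)$, conservation is inherited at every vertex of $G$ other than $v$, where all incident edges are positive $G$-edges; summing these equations over $V(G)\setminus\{v\}$ forces the net $\phi$-value on the $G$-edges incident to $v$ to vanish, so conservation holds at $v$ as well. Thus $\phi$ restricted to $G$ is a nowhere-zero $5$-flow of $(G,+)$, equivalently of the ordinary graph $G$; since $G$ was arbitrary, Tutte's conjecture would follow.

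Since this is only an observation, there is no deep obstacle; the two points requiring care are the final restriction step -- checking that flow conservation survives at the shared cut vertex -- and confirming that the gluing neither creates new negative edges nor destroys flow-admissibility. The only genuine subtlety in the statement is the quantifier on $k$: the reduction needs a flow-admissible gadget with exactly $k$ negative edges, so it is the range $k\ge 2$ that carries the real content.
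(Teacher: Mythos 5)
Your proof is correct and follows essentially the same route as the paper: identify a vertex of all-positive $G$ with a vertex of a flow-admissible signed graph having $k$ negative edges, note the result is flow-admissible with $k$ negative edges, apply the hypothesis, and restrict the resulting nowhere-zero $5$-flow back to $G$. Your added checks (conservation at the cut vertex, existence of the gadget, the $k\le 1$ edge cases) are sensible elaborations of details the paper leaves implicit.
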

 
Since for every $k\geq 3$ there is a signed graph with $k$ negative edges which does not admit a nowhere-zero $5$-flow (see \cite{SS}), the class of signed graphs with two negative edges is of a great importance. In the opposite direction we will prove that Tutte's conjecture implies Bouchet's conjecture for signed graphs with two negative edges. 
 
In the next section we introduce necessary notions and provide a couple of well-known results on flows. In Section 3 we show how to deal with small edge-cuts, and finally, in Sections 4-6 we prove results on flows for signed graphs with two negative edges.

\section{Preliminaries}

\textit{A signed graph} $(G,\sigma)$ is a graph $G$ and a function $\sigma:\ E(G) \to \{-1,1\}$. The function $\sigma$ is called \textit{a signature}. The set of edges with
negative signature is denoted by $N_\sigma$. It is called \textit{the set of negative edges}, while $E(G) - N_{\sigma}$ is called \textit{the
set of positive edges}. If all edges of $(G,\sigma)$ are positive, i. e. when $N_\sigma=\emptyset$, then $(G,\sigma)$ will be denoted by $(G,\texttt{1})$ and will be called an \textit{all-positive signed graph}.

An assignment $D$ that assigns a direction to every edge according to a given signature is called an \textit{orientation} of $(G,\sigma)$. A positive edge can be directed like \includegraphics[scale=0.1]{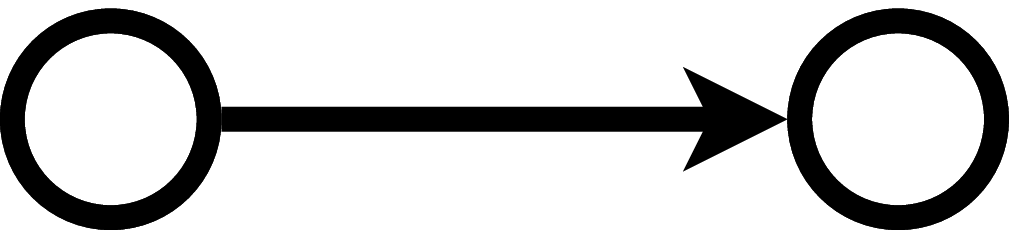} or like \includegraphics[scale=0.1]{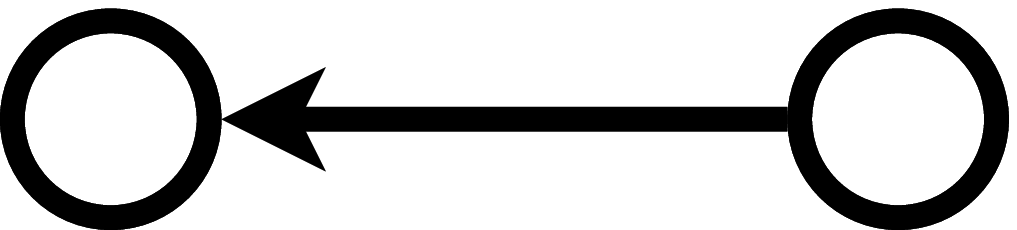}. A
negative edge can be directed like \includegraphics[scale=0.1]{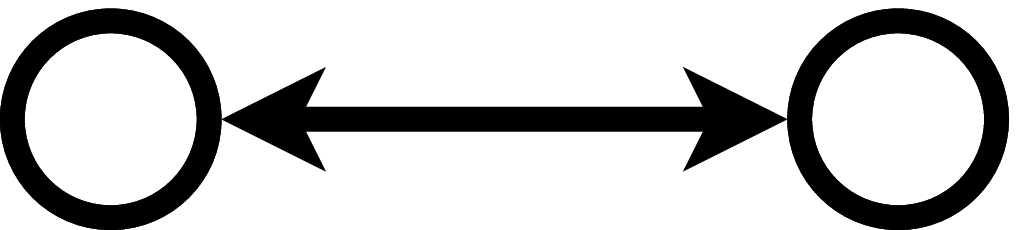} (so-called \textit{extroverted edge}) or like \includegraphics[scale=0.1]{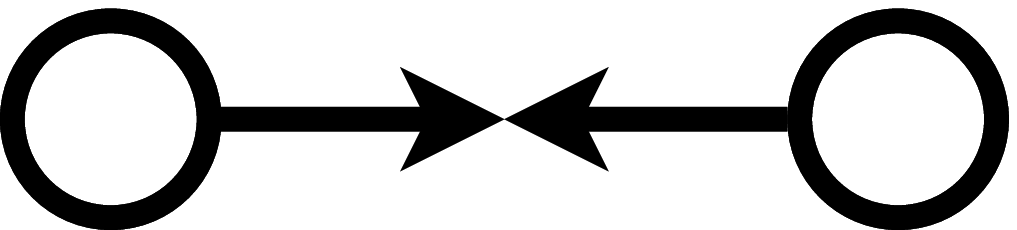} (so-called \textit{introverted edge}). An oriented signed graph is also called \textit{a bidirected graph}.
Sometimes it is helpful to consider an edge $e=vw$ as two half-edges $h_v(e)$ and $h_w(e)$ and the orientation of the edges as an orientation of the half-edges. 

Let $(G,\sigma)$ be a signed graph. \textit{A switching at $v$} defines a graph $(G,\sigma')$ with $\sigma'(e) = -\sigma(e)$ if $e$ is incident to $v$, and $\sigma'(e) = \sigma(e)$ otherwise. We say that signed graphs
$(G,\sigma)$ and $(G,\sigma^*)$ are \textit{equivalent} if they can be obtained from each other by a sequence of switchings. We also say that
$\sigma$ and $\sigma^*$ are \textit{equivalent signatures} of $G$. If we consider a signed graph with an orientation $D$, then switching at $v$ is a change of the orientations of the half-edges that are incident with $v$. If $D^*$ is the resulting orientation, then we say that $D$ and $D^*$ are \textit{equivalent orientations}.

Let $A$ be an abelian group. An \emph{$A$-flow} $(D,\phi)$ on $(G,\sigma)$ consists of an orientation $D$ and a function $\phi : E(G) \rightarrow {A}$ satisfying \textit{Kirchhoff's law}: for every vertex the sum of incoming values equals the sum of outgoing values. If $0\notin \phi(E(G))$, then we say that the $A$-flow is \textit{nowhere-zero}. Let $k$ be a positive integer. A nowhere-zero $\mathbb{Z}$-flow such that  $ -k< \phi(e) <k$ for every $e\in E(G)$ is called a \textit{nowhere-zero $k$-flow}. 
A signed graph $(G,\sigma)$ is \textit{flow-admissible} if it admits a nowhere-zero $k$-flow for some $k$. The flow number of a flow-admissible signed graph $(G,\sigma)$  is
\\
$$F((G,\sigma)) = \min\{ k : (G, \sigma) \textrm{ admits a nowhere-zero } k \textrm{-flow}  \}.$$ 
\\
This minimum always exists and we will abbreviate $F((G,\sigma))$ to $F(G,\sigma)$.

If  $(G,\sigma)$ admits a nowhere-zero $A$-flow $(D,\phi)$ and $(G,\sigma^*)$ is equivalent to $(G,\sigma)$, then there exists an equivalent orientation $D^*$ to $D$ such that $(D^*,\phi)$ is a nowhere-zero $A$-flow on $(G,\sigma^*)$. To find $D^*$ it is enough to switch at the vertices that are switched in order to obtain $\sigma^*$ from $\sigma$. Thus, it is easy to see that $F(G,\sigma) = F(G,\sigma^*)$.

We note that flows on signed graphs that are all-positive are equivalent to flows on graphs (in fact, a nowhere-zero $k$-flow ($A$-flow, respectively) on a graph $G$ can be defined as a nowhere-zero $k$-flow ($A$-flow, respectively) on $(G,\tt{1})$). This allows us to state known results for flows on graphs in terms of flows on signed graphs, and vice-versa. We will freely make a use of this fact. While a graph is flow-admissible if and only if it contains no bridge, the definition of flow-admissibility for signed graphs is more complicated -- it is closely related to the concept of balanced and unbalanced circuits.

A circuit of $(G,\sigma)$ is \textit{balanced} if it contains an even number of negative edges; otherwise it is \textit{unbalanced}. Note that a circuit of $(G,\sigma)$ remains balanced (resp. unbalanced) after switching at any vertex of $(G,\sigma)$. The signed graph
$(G,\sigma)$ is \textit{an unbalanced graph} if it contains an unbalanced circuit; otherwise $(G,\sigma)$ is \textit{a balanced graph}. It is well known (see e.g.~\cite{Raspaud_Zhu_2011}) that $(G,\sigma)$ is balanced if and only if it is equivalent to $(G,\texttt{1})$. A \textit{barbell} of $(G,\sigma)$ is the union of two edge-disjoint unbalanced cycles $C_1$, $C_2$ and a path $P$ satisfying one of the
following properties:
\begin{itemize}
\item $C_1$ and $C_2$ are vertex-disjoint, $P$ is internally
  vertex-disjoint from $C_1\cup C_2$ and shares an endvertex with each
  $C_i$, or
\item $V(C_1)\cap V(C_2)$ consists of a single vertex $w$, and $P$ is
  the trivial path consisting of $w$.
\end{itemize}
 
Balanced circuits and barbells are called \textit{signed circuits}. They are crucial for flow-admissibility of a signed graph.

\begin{lemma}[Lemma 2.4 and Lemma 2.5 in \cite{Bouchet_1983}]\label{flow-admissible}
Let $(G,\sigma)$ be a signed graph. The following statements are equivalent.
\begin{enumerate}
\item $(G,\sigma)$ is not flow-admissible.
\item  $(G,\sigma)$ is equivalent to $(G,\sigma')$ with $|N_{\sigma'}| = 1$ or $G$ has a bridge $b$ such that
a component of $G-b$ is balanced.
\item $(G,\sigma)$ has an edge that is contained neither in a balanced circuit nor in a barbell.
\end{enumerate}
\end{lemma}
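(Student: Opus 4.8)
The plan is to reduce to a connected underlying graph $G$ (flow-admissibility, the presence of a single negative edge up to switching, and the existence of a ``bad'' edge are all decided inside the component that contains the relevant edge), and then to prove the cycle of implications $(1)\Rightarrow(3)\Rightarrow(2)\Rightarrow(1)$. The engine behind everything is the principle that \emph{an edge $e$ carries a nonzero value in some nowhere-zero integer flow if and only if $e$ lies on a signed circuit} (a balanced circuit or a barbell); indeed, signed circuits are exactly the minimal supports of integer flows. Condition $(3)$ asserts the existence of an edge on no signed circuit, so $(1)\Leftrightarrow(3)$ is essentially this principle, while $(2)$ is the explicit structural translation of ``some edge lies on no signed circuit''. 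I expect the implication $(3)\Rightarrow(2)$, and within it the case of a non-bridge bad edge, to be where the real work lies.

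For $(1)\Rightarrow(3)$ I argue the contrapositive: if every edge lies on a signed circuit, then $(G,\sigma)$ is flow-admissible. First I check that each signed circuit carries a nowhere-zero integer flow --- a balanced circuit as an ordinary cycle, and a barbell by pushing a flow around each of its two unbalanced cycles and routing the resulting imbalance along the connecting path (a short check at the two junctions). Fixing one global orientation of $G$ and choosing, for every edge $e$, a signed circuit $S_e$ through $e$ with its flow, I then sum these flows with coefficients $B^{i(e)}$ for a large base $B$ and distinct exponents. On each edge the dominant term prevents cancellation, so the sum is a nowhere-zero $\mathbb{Z}$-flow and $(G,\sigma)$ is flow-admissible.

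For $(2)\Rightarrow(1)$ I treat the two clauses of $(2)$ separately, using that the flow number is invariant under switching. If $(G,\sigma)$ is equivalent to a signature with a single negative edge $f$, I may assume $f$ is the only negative edge; summing Kirchhoff's law over all vertices, every positive edge cancels while a negative edge contributes $\pm 2\phi$, so $2\phi(f)=0$, forcing $\phi(f)=0$ and ruling out a nowhere-zero flow. If instead $G$ has a bridge $b$ whose removal leaves a balanced component $K$, I switch $K$ to be all-positive; summing Kirchhoff's law over the vertices of $K$ now leaves only the contribution of the unique crossing edge $b$, whence $\phi(b)=0$ and again no nowhere-zero flow exists.

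Finally, for $(3)\Rightarrow(2)$ let $e=uv$ be an edge on no signed circuit. If $e$ is a bridge, then it is on no circuit, and if both components of $G-e$ were unbalanced I could pick an unbalanced cycle in each and join them through $e$ to form a barbell containing $e$; hence some component of $G-e$ is balanced, which is the bridge clause of $(2)$. If $e$ is not a bridge, then $G-e$ is connected and every circuit through $e$ is unbalanced, so every $u$--$v$ path in $G-e$ has the same sign $-\sigma(e)$. I claim $G-e$ is balanced: otherwise it contains an unbalanced cycle $Z$, and since the two arcs of $Z$ between any two of its vertices have opposite signs while all $u$--$v$ paths share a sign, $Z$ cannot contain both $u$ and $v$, and moreover no $u$--$v$ path can meet $Z$ in two distinct vertices (the two reroutings along the arcs of $Z$ would be equally-signed $u$--$v$ paths, contradicting that the arcs differ in sign). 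Thus some $u$--$v$ path $P$ meets $Z$ in at most one vertex, and $e+P$ together with $Z$ is a barbell through $e$ --- a contradiction. Hence $G-e$ is balanced; switching it to all-positive leaves $e$ as the unique negative edge, which is the first clause of $(2)$. \textbf{The main obstacle} is exactly this last step: ruling out that an unbalanced cycle of $G-e$ obstructs the formation of a barbell, which is overcome by the arc-parity observation together with a careful check of the two defining shapes of a barbell.
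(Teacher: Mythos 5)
The paper never proves this lemma --- it is imported verbatim from Bouchet (Lemmas 2.4 and 2.5 of \cite{Bouchet_1983}) --- so there is no in-paper argument to compare against. Your proof is, in essence, a correct reconstruction of Bouchet's: the cycle $(1)\Rightarrow(3)\Rightarrow(2)\Rightarrow(1)$, the fact that each signed circuit carries a nowhere-zero integer flow (value $1$ on the cycles, $2$ on the connecting path of a barbell), the non-cancelling linear combination with coefficients $B^{i(e)}$, and the Kirchhoff summation over all vertices (resp.\ over the vertices of the balanced side of a bridge) for $(2)\Rightarrow(1)$ are all sound.

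A few places in $(3)\Rightarrow(2)$ need one more line each. First, in showing that no $u$--$v$ path $P$ meets an unbalanced cycle $Z$ of $G-e$ in two vertices, the two reroutings along the arcs of $Z$ are only guaranteed to be \emph{paths} if you reroute between the first and the last vertex of $P$ on $Z$; otherwise a detour may revisit $P$ and you only get walks, for which the ``all $u$--$v$ paths have the same sign'' premise does not directly apply. Second, when $P+e$ and $Z$ turn out to be vertex-disjoint, the barbell is not ``$e+P$ together with $Z$'' alone: you still need the connecting path required by the definition, which exists (take a shortest $V(P+e)$--$V(Z)$ path in the connected graph) and is internally disjoint from both cycles by minimality. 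Third, after switching $G-e$ to all-positive you assert that $e$ is the unique negative edge; this requires observing that $e$ lies on some cycle (it is not a bridge), that this cycle is unbalanced, and that unbalancedness is switching-invariant, so $e$ cannot have become positive. Finally, the equivalence $(1)\Leftrightarrow(2)$ genuinely requires $G$ to be connected --- two disjoint triangles, each carrying exactly one negative edge, satisfy $(1)$ and $(3)$ but not $(2)$, since switching preserves the parity of the number of negative edges in each component and no barbell can straddle two components --- so your opening ``reduction to a connected underlying graph'' should be stated as a standing hypothesis (as Bouchet and this paper implicitly do) rather than as a harmless reduction. None of these points is fatal; each is a one-line repair.
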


When a signed graph has a single negative edge, it is not flow-admissible by the previous lemma. This can be seen also from the fact that the sum of flow values over all negative edges is 0 provided that the negative edges have the same orientation. Therefore, if a flow-admissible signed graph has two negative edges, which is the case considered in this paper, and the negative edges have opposite orientations, then the flow value on the negative edges is the same for any nowhere-zero $k$-flow.

Let $(D,\phi)$ be a nowhere-zero $k$-flow on $(G,\sigma)$. If we reverse the orientation of an edge $e$ (or of the two half-edges, respectively) and replace $\phi(e)$ by $-\phi(e)$, then we obtain another nowhere-zero
$k$-flow $(D^*,\phi^*)$ on $(G,\sigma)$. Hence, if $(G,\sigma)$ is flow-admissible, then it has always a nowhere-zero flow with all flow values positive.

Let $n \geq 1$ and $P = u_0u_1...u_n$ be a path. We say that $P$ is
\textit{a $v$-$w$-path} if $v= u_0$ and $w=u_n$. Let $(G,\sigma)$ be
oriented. If a path $P$ of $G$ does not contain any negative edge and
for every $i \in \{0, \dots ,n-1\}$ the edge $u_iu_{i+1}$ is directed from
$u_i$ to $u_{i+1}$, then we say that $P$ is \textit{a directed
$v$-$w$-path}.

We will frequently make a use of the following well-known lemma.

\begin{lemma}\label{lemma:directeduvpath}
Let $G$ be a graph and $(D,\phi)$ be a nowhere-zero $\mathbb{Z}$-flow on
$(G,\texttt{1})$. If $\phi(e) > 0$ for every $e \in E(G)$, then for any two
vertices $u$,$v$ of $G$ there exists a directed $u$-$v$-path.
\end{lemma}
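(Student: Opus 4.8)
The plan is to fix the start vertex $u$ and let $S$ denote the set of all vertices $w$ for which a directed $u$-$w$-path exists (the trivial path placing $u\in S$). Since a directed path can only connect vertices of the same component of $G$, we may assume that $G$ is connected; the goal then becomes to show $S=V(G)$. The strategy is to prove that $S$ is ``closed'' under the orientation, i.e.\ that no edge leaves $S$ and, consequently, no edge enters $S$, from which $S=V(G)$ follows because $G$ is connected.

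First I would observe that under the orientation $D$ no edge is directed out of $S$. Indeed, if some edge $e=st$ were directed from $s\in S$ to $t\notin S$, then appending $e$ to a directed $u$-$s$-path (which exists since $s\in S$) would produce a directed $u$-$t$-path, forcing $t\in S$, a contradiction. Recall that in an all-positive signed graph every edge has an ordinary tail-to-head orientation, so the notions of ``outgoing'' and ``incoming'' edges across $S$ are unambiguous.

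The key step is a cut argument based on Kirchhoff's law. Summing the conservation equation over all vertices of $S$, every edge with both endpoints in $S$ contributes once as an incoming and once as an outgoing value and thus cancels, so the total flow on edges directed out of $S$ equals the total flow on edges directed into $S$. By the previous paragraph the outgoing total is an empty sum, hence zero; therefore the incoming total is also zero. Since $\phi(e)>0$ for every edge, a vanishing sum of strictly positive values forces the set of edges entering $S$ to be empty as well. Consequently no edge of $G$ has exactly one endpoint in $S$, so $S$ is a union of connected components of $G$.

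Finally, since $u\in S$ and $G$ is connected, we conclude $S=V(G)$, so in particular $v\in S$, which by the definition of $S$ yields the desired directed $u$-$v$-path. I expect the only delicate point to be the cut identity, namely making precise that summing Kirchhoff's law over $S$ cancels the internal edges and leaves exactly the net flow across the boundary of $S$; once this is established, the conclusion is an immediate consequence of the strict positivity of $\phi$.
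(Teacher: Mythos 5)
Your proposal is correct and follows essentially the same route as the paper: both define the set of vertices reachable from $u$ by directed paths, observe that no edge leaves this set, and use Kirchhoff's law together with the strict positivity of $\phi$ to conclude that no edge crosses the boundary, forcing the reachable set to be all of $V(G)$. The only cosmetic difference is that you argue directly while the paper phrases it as a contradiction.
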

\begin{proof}
Assume that there are two vertices $u$ and $v$ for which there exists no directed $u$-$v$-path. Let $U$ be the set that consists of $u$ and all vertices $w$ for which there exists a directed $u$-$w$-path. Then $v\in V(G)-U$ and all edges between $U$ and $V(G)-U$ are directed towards $U$. These edges induce an edge-cut for which Kirchhoff's law is false, because $\phi(e)>0$ for every $e$. But then $(D,\phi)$ is not a flow, a contradiction.
\end{proof}

Flows on signed graphs were introduced by Bouchet~\cite{Bouchet_1983}, who stated the following conjecture.

\begin{conjecture} [\cite{Bouchet_1983}] \label{Bouchet_conj}
Let $(G,\sigma)$ be a signed graph. If $(G,\sigma)$ is flow-admissible, then $(G,\sigma)$ admits a nowhere-zero 6-flow.
\end{conjecture}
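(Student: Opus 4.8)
The plan is to adapt Seymour's proof of the $6$-flow theorem for ordinary graphs to the signed setting, reducing the general problem to a well-structured core and then assembling a nowhere-zero $6$-flow out of pieces of bounded order. First I would reduce to cubic graphs: by the reduction indicated in the abstract, a bound for cubic flow-admissible signed graphs transfers to the general case, so I may assume $(G,\sigma)$ is cubic. Using Lemma~\ref{flow-admissible}, I would then pass to a minimal counterexample and argue that it must be $2$-edge-connected and that, after switching, its negative edges can be arranged so that no small edge-cut separates the unbalanced part from the rest. Bridges are already under control, since by part~(2) of the lemma a bridge whose removal leaves a balanced component destroys flow-admissibility, so a minimal counterexample contains no such bridge.

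Second, I would exploit part~(3) of Lemma~\ref{flow-admissible}: every edge of a flow-admissible signed graph lies in a signed circuit, that is, in a balanced circuit or a barbell. The strategy is to find a family $\mathcal{C}$ of signed circuits covering $E(G)$ together with weights in $\mathbb{Z}_6 \cong \mathbb{Z}_2 \times \mathbb{Z}_3$ whose superposition is nowhere-zero. Concretely, following Seymour, I would first produce a nowhere-zero $\mathbb{Z}_2$-flow supported on all but a controlled set of edges, and then cover the remaining edges with a $\mathbb{Z}_3$-flow built from balanced circuits and barbells, orienting the pieces so that the two parts never cancel on a shared edge.

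The hard part, and the reason the statement is a conjecture rather than a theorem, is the behaviour of barbells under superposition. A balanced circuit carries a uniform unit of flow exactly like an ordinary cycle, so the parity and mod-$3$ bookkeeping for the balanced portion mirrors the unsigned argument. A barbell, however, forces flow value $\pm 2$ along its connecting path while its two unbalanced cycles carry only $\pm 1$, so the clean decomposition that drives Seymour's proof breaks down precisely on the path joining the two unbalanced cycles. Controlling the interaction between the balanced cover and these unbalanced cores, so that the combined flow avoids $0$ everywhere while keeping every value in $\{\pm1,\dots,\pm5\}$, is exactly the open difficulty: it is what currently limits the general bound to the nowhere-zero $12$-flow announced by DeVos, and narrowing the gap from $12$ down to $6$ is the substance of the conjecture.
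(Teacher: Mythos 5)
The statement you were asked to prove is Bouchet's conjecture itself, which the paper states as an open conjecture and does not prove; there is no ``paper's own proof'' to compare against. The paper only establishes partial cases (two negative edges, with further restrictions such as bipartiteness, $3$-edge-colorability, presence of a bridge, or high cyclic connectivity), and the best known general bound remains the $12$-flow announced by DeVos. So nothing you wrote can be measured against an argument in the paper, and your text is not a proof of the statement.

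To your credit, you essentially say this yourself: your third paragraph concedes that the superposition of a $\mathbb{Z}_2$-part and a $\mathbb{Z}_3$-part breaks down on the connecting paths of barbells, where the forced flow value $\pm 2$ on the path versus $\pm 1$ on the unbalanced cycles destroys the clean Seymour-style bookkeeping, and that overcoming this ``is the substance of the conjecture.'' That is an accurate diagnosis of the obstruction, but it means the central step of your plan is missing rather than merely sketched. Concretely: you never exhibit the covering family $\mathcal{C}$ of signed circuits, never specify the weights, and never show the combined values lie in $\{\pm 1,\dots,\pm 5\}$ and avoid $0$; the reduction to cubic graphs and the handling of bridges via Lemma~\ref{flow-admissible} are fine but peripheral. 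If you want to contribute something provable along these lines, the realistic targets are the special cases the paper actually treats, e.g.\ Theorem~\ref{thm:cubbip}, Theorem~\ref{thm:bridges}, or Theorem~\ref{thm:cubic3edgecolourable}, where the two-negative-edge hypothesis lets one sidestep the barbell superposition problem entirely by prescribing the flow values on the negative edges and routing augmenting paths with Lemma~\ref{lemma:directeduvpath}.
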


Seymour's 6-flow theorem for graphs implies Bouchet's conjecture for all-positive signed graphs.

\begin{theorem} [\cite{Sey}] \label{6_Flow}
If $(G,\texttt{1})$ is flow-admissible, then $(G,\texttt{1})$ admits a nowhere-zero 6-flow.
\end{theorem}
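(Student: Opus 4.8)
The plan is to prove the equivalent statement that every bridgeless graph $G$ admits a nowhere-zero $6$-flow, using the fact recorded above that an ordinary graph is flow-admissible if and only if it contains no bridge. Throughout I would argue by contradiction with a counterexample $G$ having the fewest edges. Such a $G$ is connected and bridgeless; any vertex of degree $2$ may be suppressed (merging its two incident edges into one) without changing the flow number, and across any $2$-edge-cut a nowhere-zero $6$-flow on the smaller graph obtained by contracting one of the two cut edges lifts back to $G$, while flows also combine freely over cut-vertices. Each of these reductions strictly decreases the number of edges and preserves bridgelessness, so the minimal counterexample has minimum degree at least $3$ and no $2$-edge-cut; that is, it suffices to treat \emph{$3$-edge-connected} graphs.

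Next I would pass from integer flows to group flows. By Tutte's theorem the existence of a nowhere-zero $6$-flow is equivalent to the existence of a nowhere-zero $\mathbb{Z}_6$-flow, and since $\mathbb{Z}_6 \cong \mathbb{Z}_2 \times \mathbb{Z}_3$, a nowhere-zero $\mathbb{Z}_6$-flow is exactly a pair $(g,f)$, where $g$ is a $\mathbb{Z}_2$-flow and $f$ a $\mathbb{Z}_3$-flow with $(g(e),f(e)) \neq (0,0)$ for every edge $e$. Writing $H$ for the support of $g$, which is necessarily an even subgraph, the coverage condition says exactly that $f$ is nonzero on every edge outside $H$. Restricting such an $f$ to the edges off $H$ yields a nowhere-zero $\mathbb{Z}_3$-flow on the contraction $G/H$; conversely a nowhere-zero $\mathbb{Z}_3$-flow on $G/H$ lifts to $G$ by filling in the edges of $H$, because each component $K$ of $H$ is connected and the net demand it must absorb sums to zero (this is precisely conservation at the vertex of $G/H$ to which $K$ was contracted). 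Thus the whole problem reduces to the following assertion.

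\emph{Main Lemma.} Every $3$-edge-connected graph $G$ has an even subgraph $H$ such that $G/H$ admits a nowhere-zero $\mathbb{Z}_3$-flow. I would prove this by an extremal argument: among all even subgraphs $H$ for which $G/H$ carries a nowhere-zero $\mathbb{Z}_3$-flow, choose one for which $G/H$ has as few vertices as possible (the family is nonempty, since a spanning connected even subgraph contracts $G$ to a single vertex, on which any assignment of $1$ to each loop is a nowhere-zero $\mathbb{Z}_3$-flow). If $G/H$ is a single vertex we are done. Otherwise I would use $3$-edge-connectivity to find a circuit meeting the current structure that can be incorporated into $H$, replacing $H$ by its symmetric difference with the circuit so as to stay even, in such a way that the new contraction still supports a nowhere-zero $\mathbb{Z}_3$-flow while having strictly fewer vertices — contradicting minimality.

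The main obstacle is exactly this enlargement step in the Main Lemma: exhibiting the circuit that extends $H$ while preserving the $\mathbb{Z}_3$-flow. This is where $3$-edge-connectivity is indispensable, guaranteeing that the relevant contraction is itself free of small edge-cuts and possesses enough circuit structure to re-route a $\mathbb{Z}_3$-flow after the modification. By comparison, the two reductions of the first two paragraphs and the lifting of $f$ from $G/H$ back to $G$ are routine, so essentially all of the difficulty of Theorem~\ref{6_Flow} is concentrated in the structural Main Lemma.
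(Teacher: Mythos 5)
First, a point of comparison: the paper does not prove this statement --- it is Seymour's $6$-flow theorem, quoted from \cite{Sey} --- so there is no internal proof to measure your attempt against; it has to stand on its own. Your preliminary reductions (to connected graphs of minimum degree at least $3$ with no $2$-edge-cut) and the translation, via Tutte's integer/group equivalence and $\mathbb{Z}_6\cong\mathbb{Z}_2\times\mathbb{Z}_3$, into finding an even subgraph $H$ such that $G/H$ admits a nowhere-zero $\mathbb{Z}_3$-flow are correct and are indeed the standard frame of Seymour's argument. But the entire content of the theorem sits in your ``Main Lemma,'' and the proof you offer for it is both incomplete and, as formulated, unworkable. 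It is incomplete because the enlargement step --- producing a circuit whose incorporation into $H$ strictly decreases $|V(G/H)|$ while preserving the $\mathbb{Z}_3$-flow --- is exactly the hard part, and you only assert that $3$-edge-connectivity makes it possible. It is unworkable because your extremal scheme can only terminate when $G/H$ is a single vertex, i.e.\ when $H$ is a connected spanning even subgraph. In a cubic graph an even subgraph is a disjoint union of circuits, so a connected spanning one is a Hamiltonian circuit; the Petersen graph is cubic and $3$-edge-connected but not Hamiltonian, so for it no even subgraph $H$ with $|V(G/H)|=1$ exists, and your claimed descent must therefore fail at some stage. (The Petersen graph does admit a nowhere-zero $6$-flow; taking $H$ to be a $2$-factor, which consists of two disjoint $5$-circuits, gives $|V(G/H)|=2$ --- the point is that the minimum of $|V(G/H)|$ over admissible $H$ is not $1$.) For the same reason your justification that the family of admissible $H$ is nonempty, namely that ``a spanning connected even subgraph contracts $G$ to a single vertex,'' is circular: such a subgraph need not exist, and nonemptiness of the family is essentially equivalent to the statement being proved.

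Seymour's actual argument does not minimize $|V(G/H)|$. He builds a chain $H_1\subseteq\cdots\subseteq H_n$ of connected subgraphs with $H_n$ spanning, in which each step attaches a new piece to the part already built by a controlled number of edges (at most two returning edges per step); the $\mathbb{Z}_3$-flow is then constructed by backward induction along this chain, the two attachment edges giving exactly enough freedom to correct any $\mathbb{Z}_3$-imbalance, and the $\mathbb{Z}_2$-component is supported on an even subgraph covering whatever the $\mathbb{Z}_3$-part misses. In particular that even subgraph is in general neither connected nor spanning. To complete your write-up you would need to replace the extremal argument by a construction of this kind; as it stands the proposal has a genuine gap at its central step.
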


Tutte~\cite{Tutte_1954} proved that a graph has a nowhere-zero $k$-flow if and only if it has a nowhere-zero $\mathbb{Z}_k$-flow. This is not true for signed graphs, but in our paper we will apply the following theorem, which is a straightforward corollary of Theorem 3.2 in~\cite{JLPT}. 

\begin{theorem}[\cite{JLPT}]\label{JLPT}
Let $G$ be a 3-edge connected graph and $v\in V(G)$ be of degree~3. If $a,b,c\in \mathbb{Z}_6$ are such that $a+b+c=0$, then $G$ admits a nowhere-zero $\mathbb{Z}_6$-flow such that the edges incident to $v$ receive flow values $a,b,c$.
\end{theorem}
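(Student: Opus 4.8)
The plan is to deduce the statement from the group-connectivity theory of \cite{JLPT} by deleting $v$ and trading the prescription of three edge values for a prescription of vertex demands. Fix an orientation in which the three edges at $v$, say $e_i=vv_i$ for $i\in\{1,2,3\}$, all point away from $v$, and set $(a_1,a_2,a_3)=(a,b,c)$. Kirchhoff's law at $v$ then reads $a_1+a_2+a_3=0$, which is precisely the hypothesis; since the flow must be nowhere-zero we may also assume each $a_i\neq 0$. A $\mathbb{Z}_6$-flow on $G$ with $\phi(e_i)=a_i$ is the same thing as a nowhere-zero $\mathbb{Z}_6$-flow $\chi$ on $G'=G-v$ whose boundary assigns $a_i$ to $v_i$ (the value that $e_i$ pushes into $v_i$) and $0$ to every other vertex: given such a $\chi$, appending the fixed values $a_i$ on the edges $e_i$ restores Kirchhoff's law at every vertex --- at $v$ by the hypothesis $\sum_i a_i=0$, at each $v_i$ by the choice of boundary --- and leaves no edge with value $0$. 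Since $\sum_i a_i=0$ the demand is admissible, so the theorem reduces to realizing this one demand on $G'$.

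The key structural point is that $G'=G-v$ is $2$-edge-connected. Indeed, $G'$ is connected: were it split into nonempty parts $X,Y$ with no edge between them, all connections between $X$ and $Y$ in $G$ would pass through $v$, forcing at least three edges from $v$ into each of $X$ and $Y$ and hence $\deg(v)\geq 6$. And $G'$ has no bridge: a bridge $b$ separating $G'$ into components $X$ and $Y$ would, in $G$, make $\{b\}$ together with the edges from $v$ to $X$ an edge-cut, so $3$-edge-connectivity of $G$ forces at least two edges from $v$ to $X$ and, symmetrically, at least two to $Y$, again contradicting $\deg(v)=3$. Thus only $2$-edge-cuts can survive the deletion.

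I would then invoke Theorem~3.2 of \cite{JLPT}, which realizes an admissible $\mathbb{Z}_6$-valued vertex demand on a suitably connected graph by a nowhere-zero flow, to produce $\chi$ on $G'$, and assemble the flow on $G$ as above. The main obstacle is exactly the gap left open by the previous paragraph: group-connectivity results are cleanest for $3$-edge-connected graphs, whereas $G'$ may only be $2$-edge-connected. Across a surviving $2$-edge-cut the two edge values are linked, and one must check that they are never forced simultaneously to $0$; because the nonzero demand is spread over the three neighbours $v_1,v_2,v_3$ and sums to $0$, this coupling can be resolved. I expect this to be absorbed either inside Theorem~3.2 of \cite{JLPT} directly --- if it is already phrased for a $3$-edge-connected graph carrying a degree-$3$ vertex, the corollary is then only a translation between their modular-orientation language and the flow language used here --- or by the standard reduction calculus of group connectivity, contracting a maximal $\mathbb{Z}_6$-connected subgraph and inducting on the number of $2$-edge-cuts.
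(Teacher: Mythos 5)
First, a point of reference: the paper does not prove this statement at all --- it imports it verbatim as ``a straightforward corollary of Theorem 3.2 in~\cite{JLPT}'' and never spells out the derivation. So what you have written is an attempt to supply that missing derivation, and it cannot be matched against an argument in the paper. Your setup is the standard and correct one: trading the three prescribed edge values at $v$ for a boundary (demand) function $\beta$ on $G-v$ supported on $v_1,v_2,v_3$ with $\beta(v_i)=a_i$ and $\sum_i a_i=0$, and your verification that $G-v$ is $2$-edge-connected is sound. (You are also right to note, as the theorem statement does not, that one must implicitly assume $a,b,c\neq 0$.)

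The genuine gap is the one you name yourself and then do not close. A $2$-edge-connected graph need not be $\mathbb{Z}_6$-connected --- a long circuit already fails --- so Theorem~3.2 of \cite{JLPT} (that $3$-edge-connected graphs are $A$-connected for $|A|\geq 6$) cannot be applied to $G-v$ as it stands, and the sentences ``this coupling can be resolved'' and ``I expect this to be absorbed either inside Theorem~3.2 \dots or by the standard reduction calculus'' are hopes, not arguments. What is actually needed, and is missing, is the following: observe that every $2$-edge-cut of $G-v$ must separate the set $\{v_1,v_2,v_3\}$ (otherwise it would survive as a $2$-edge-cut of $G$), decompose $G-v$ along its $2$-edge-cuts into $3$-edge-connected pieces, note that the flow through each such cut $\{g,g'\}$ is constrained only by $f(g)+f(g')=s$ for a value $s$ determined by which $v_i$ lie on which side, choose nonzero $f(g),f(g')$ realizing this sum (always possible in $\mathbb{Z}_6$ since only a few values are forbidden), and then invoke the $\mathbb{Z}_6$-connectivity of each $3$-edge-connected piece to realize the induced boundaries. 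This is routine group-connectivity bookkeeping, but it is the entire content of the ``corollary,'' and without it your proof does not reach the conclusion. As written, the proposal is an accurate reduction plus an explicitly acknowledged unproven claim, so it cannot be accepted as a proof.
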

\medskip

\textit{An edge-coloring} of a graph $G$ is to set a color to every edge of $G$ in such a way that two adjacent edges obtain different colors. We say that $G$ is \textit{$c$-edge-colorable} if there exists an edge-coloring of $G$ that uses at most $c$ colors. The smallest number of colors needed to edge-color $G$ is \textit{chromatic index of $G$}. By Vizing's theorem the chromatic index of a cubic graph is either 3 or 4. Bridgeless cubic graphs with chromatic index 4 are also called \textit{snarks}. Tutte \cite{Tutte_1949, Tutte_1954} proved that a cubic graph $G$ is 3-edge-colorable if and only if $G$ (and hence also $(G,\texttt{1})$) admits a nowhere-zero 4-flow, and that $G$ is bipartite if and only if $G$ (and hence $(G,\texttt{1})$) admits a nowhere-zero 3-flow.
We say that a snark $G$ is \textit{critical} if $(G-e,\texttt{1})$ admits a nowhere-zero 4-flow for every edge $e$. Critical snarks were studied for example in \cite{Kochol_2011, Nedela_Skoviera_1996, Steffen_1998}.


\section{Small edge-cuts}

In Section 4 we will show that Bouchet's conjecture holds for signed graphs with two negative edges that contain bridges. Here, we introduce a useful reduction of 2-edge-cuts (different from the one introduced by Bouchet~\cite{Bouchet_1983}). We start with well-known simple observations.

\begin{lemma}\label{lemma:p2edgecut}
Let $(G,\sigma)$ be a signed graph and $X \subseteq E(G)$ be an edge-cut of $G$. If $X=N_{\sigma}$, then $F(G,\sigma)= F(G,\texttt{1})$.
\end{lemma}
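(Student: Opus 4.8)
The plan is to exhibit $(G,\sigma)$ as equivalent to $(G,\texttt{1})$ by a single batch of switchings, and then to invoke the fact — already established in the Preliminaries — that equivalent signatures of a graph yield the same flow number. Since $X$ is an edge-cut, there is a partition $V(G)=V_1\cup V_2$ such that $X$ consists precisely of the edges with one endpoint in $V_1$ and the other in $V_2$. I would switch at every vertex of $V_1$ (switchings commute, so performing them in any order is unambiguous) and let $\sigma^*$ denote the resulting signature.

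First I would track the effect of this batch of switchings on the sign of each edge $e$. If both endpoints of $e$ lie in $V_1$, then $e$ is switched exactly twice, so $\sigma^*(e)=\sigma(e)$; if both endpoints lie in $V_2$, then $e$ is never switched, so again $\sigma^*(e)=\sigma(e)$; and if $e\in X$, then exactly one endpoint lies in $V_1$, so $e$ is switched exactly once and $\sigma^*(e)=-\sigma(e)$. Because the hypothesis $X=N_\sigma$ says that the negative edges are exactly the edges of $X$, every edge of $X$ is negative under $\sigma$ and hence becomes positive under $\sigma^*$, while every edge outside $X$ is positive under $\sigma$ and stays positive under $\sigma^*$. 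Thus $N_{\sigma^*}=\emptyset$, i.e. $\sigma^*=\texttt{1}$, so $(G,\sigma)$ and $(G,\texttt{1})$ are equivalent.

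Finally I would appeal to the observation, recorded earlier in the paper, that equivalent signed graphs have equal flow numbers (given a nowhere-zero flow on one, switching the orientation at the same vertices produces a nowhere-zero flow with the same flow values on the other). Applying this to the equivalence just established yields $F(G,\sigma)=F(G,\texttt{1})$, as required. There is no genuine obstacle here: the entire content is the bookkeeping in the second step showing that switching across one side of an edge-cut flips exactly the edges of that cut, and the statement then follows immediately from switching-invariance of the flow number.
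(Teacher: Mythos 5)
Your proposal is correct and follows exactly the same route as the paper: partition the vertex set along the cut, switch at every vertex on one side, observe that precisely the edges of $X$ change sign (so the result is all-positive), and conclude by switching-invariance of the flow number. Your version merely spells out the double-switching bookkeeping that the paper leaves implicit.
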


\begin{proof}
Let $(W_1,W_2)$ be a partition of $V(G)$ such that $w_1w_2\in X$ if and only if $w_1\in W_1$ and $w_2\in W_2$.
Switching at all vertices of $W_1$ results in an all-positive signed graph, and hence $F(G,\sigma)=F(G,\texttt{1})$.
\end{proof}

Similarly, we can proof the following lemma.

\begin{lemma}\label{lemma:noflow}
Let $(G,\sigma)$ be a signed graph such that all negative edges $N_{\sigma}$ belong to an $(|N_{\sigma}|+1)$-edge-cut. Then $(G,\sigma)$ is not flow-admissible.
\end{lemma}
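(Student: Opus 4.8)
The plan is to mimic the switching argument used in the proof of Lemma~\ref{lemma:p2edgecut} and then to invoke the characterization of non-flow-admissibility in Lemma~\ref{flow-admissible}. Write $n = |N_\sigma|$ and let $X$ be the $(n+1)$-edge-cut that contains $N_\sigma$. Let $(W_1,W_2)$ be the partition of $V(G)$ induced by $X$, so that an edge lies in $X$ exactly when it has one endvertex in $W_1$ and the other in $W_2$. Since $N_\sigma\subseteq X$ and $|X|=n+1$, the cut $X$ contains precisely one positive edge; call it $b$.

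First I would switch at every vertex of $W_1$. Exactly as in Lemma~\ref{lemma:p2edgecut}, an edge outside $X$ has both endvertices on the same side of the partition, so its sign is flipped an even number of times (zero or twice) and is left unchanged, whereas each edge of $X$ has exactly one endvertex in $W_1$ and so its sign is flipped once. Hence the resulting signature $\sigma'$, which is equivalent to $\sigma$, agrees with $\sigma$ off $X$ and is reversed on $X$. In particular every edge of $N_\sigma$ becomes positive while $b$ becomes negative, so that $N_{\sigma'}=\{b\}$ and $|N_{\sigma'}|=1$.

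Finally, since $(G,\sigma)$ is equivalent to $(G,\sigma')$ with $|N_{\sigma'}|=1$, statement~2 of Lemma~\ref{flow-admissible} is satisfied, and therefore by the equivalence of statements~1 and~2 in that lemma the signed graph $(G,\sigma)$ is not flow-admissible.

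The argument is short and I do not expect any genuine obstacle; the only point requiring care is the counting that pins down exactly one positive edge in $X$, together with the parity bookkeeping showing that switching at $W_1$ reverses precisely the edges of the cut. Once these are in place, everything reduces to the already-established Lemmas~\ref{lemma:p2edgecut} and~\ref{flow-admissible}.
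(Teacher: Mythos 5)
Your proof is correct and follows essentially the same route as the paper: both arguments observe that switching at one side of the cut turns all of $N_\sigma$ positive and the single positive cut edge negative, leaving an equivalent signature with exactly one negative edge, whence Lemma~\ref{flow-admissible} applies. You merely spell out the parity bookkeeping that the paper leaves implicit.
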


\begin{proof}
Let $e$ be the positive edge in the $(|N_{\sigma}|+1)$-edge-cut containing all negative edges of the graph. Note that there exists a switching of $(G,\sigma)$ such that the resulting signature contains only one negative edge, namely $e$. Then by Lemma~\ref{flow-admissible}, $(G,\sigma)$ admits no nowhere-zero $k$-flow.
\end{proof}

In our paper we will make a use of this straightforward corollary.

\begin{corollary}\label{cor:3edgecut}
Let $(G,\sigma)$ be a signed graph such that $|N_{\sigma}|=2$. If $(G,\sigma)$ is flow-admissible, then the two negative edges of $(G,\sigma)$ do not belong to any $3$-edge-cut.
\end{corollary}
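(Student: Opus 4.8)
The plan is to derive Corollary~\ref{cor:3edgecut} directly from Lemma~\ref{lemma:noflow} by observing that, in our setting, a $3$-edge-cut containing both negative edges is exactly an $(|N_\sigma|+1)$-edge-cut containing all negative edges. Concretely, I would argue by contraposition: suppose for a signed graph $(G,\sigma)$ with $|N_\sigma|=2$ that the two negative edges do lie in some $3$-edge-cut $X$. Since $X$ has three edges and contains both negative edges, its third edge must be positive, so $X$ is a cut consisting of the two negative edges together with one positive edge. Setting $|N_\sigma|=2$ gives $|N_\sigma|+1 = 3 = |X|$, so $X$ is precisely a $3$-edge-cut in which all of $N_\sigma$ is contained.

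With this identification in place, the hypothesis of Lemma~\ref{lemma:noflow} is satisfied verbatim: all negative edges $N_\sigma$ belong to an $(|N_\sigma|+1)$-edge-cut. I would then invoke Lemma~\ref{lemma:noflow} to conclude that $(G,\sigma)$ is not flow-admissible. This contradicts the assumption that $(G,\sigma)$ is flow-admissible, completing the contrapositive and hence establishing that no such $3$-edge-cut can exist.

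The only subtlety worth flagging explicitly is the matching of cardinalities, and here I would be careful to state that the third edge of the $3$-edge-cut is necessarily positive, so that the cut is genuinely of the form ``all negative edges plus exactly one positive edge'' demanded by Lemma~\ref{lemma:noflow}; this is where the precise value $|N_\sigma|=2$ is used, turning the generic $(|N_\sigma|+1)$-cut of the lemma into the concrete $3$-edge-cut of the corollary. Since this is purely a specialization of the preceding lemma to the case of two negative edges, I do not anticipate any real obstacle: the entire content is the arithmetic identity $|N_\sigma|+1=3$ and the observation that the surplus edge in the cut cannot itself be negative. The proof is therefore a one- or two-line reduction, and the main care is simply to phrase the contrapositive cleanly and cite Lemma~\ref{lemma:noflow} correctly.
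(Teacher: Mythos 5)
Your proof is correct and is exactly the intended argument: the paper presents this as an immediate specialization of Lemma~\ref{lemma:noflow} to the case $|N_\sigma|=2$ (so that the $(|N_\sigma|+1)$-edge-cut of the lemma is precisely a $3$-edge-cut containing both negative edges plus one positive edge), and offers no further proof. Your extra care in noting that the third edge of the cut must be positive is a sound, if minor, point of hygiene that matches the implicit assumption in the lemma's own proof.
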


Let $X=\{uv,xy\}$ be a 2-edge-cut of $(G,\sigma)$ such that $(G-X,\sigma|_{G-X})$ contains a component that is all-positive. If $X=N_{\sigma}$, then $F(G,\sigma)\leq 6$ by Lemma~\ref{lemma:p2edgecut} and Theorem~\ref{6_Flow}. If $uv$ is positive, then we will use the following reduction. A \textit{2-edge-cut reduction of $(G,\sigma)$} with respect to the edge-cut $\{uv,xy\}$ is a disjoint union of two signed graphs, $(G_1,\sigma_{G_1})$ and all-positive $(G_2,\sigma_{G_2})$, that are obtained from $(G,\sigma)$ as follows: remove $uv$ and $xy$ and add a positive edge $vy$ and an edge $ux$ whose sign equals $\sigma(xy)$. Note that $|N_{\sigma_{G_1}}| = |N_{\sigma}|$.
We say that $(G_1,\sigma_{G_1})$ and $(G_2,\sigma_{G_2})$ are \textit{resulting graphs} of the 2-edge-cut reduction of $(G,\sigma)$ (with respect to a 2-edge-cut $\{uv,xy\}$).  

\begin{observation}\label{obser:2red}
The resulting graphs of a 2-edge-cut reduction of a flow-admissible signed graph are flow-admissible.
\end{observation}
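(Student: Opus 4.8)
The plan is to produce explicit nowhere-zero flows on the two resulting graphs by restricting a flow of $(G,\sigma)$ and then repairing it at the four cut-vertices. Write $X=\{uv,xy\}$ with $uv$ positive (the reduction case), and let $V_1\ni u,x$ and $V_2\ni v,y$ be the two sides of the cut, so that $(G_1,\sigma_{G_1})$ lives on $V_1$ together with the new edge $ux$ of sign $\sigma(xy)$, while the all-positive $(G_2,\sigma_{G_2})$ lives on $V_2$ together with the new positive edge $vy$. Since $(G,\sigma)$ is flow-admissible, fix a nowhere-zero $\mathbb{Z}$-flow $(D,\phi)$ on it; recalling that a flow-admissible signed graph carries a nowhere-zero flow with all values positive, we may take $\phi>0$ on every edge. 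For a cut-vertex $t\in\{u,v,x,y\}$ let $d_t$ denote the contribution (incoming minus outgoing value) of the half-edge of its cut edge at $t$; removing the two cut edges then leaves a deficit $d_t$ at each such $t$ while Kirchhoff's law stays intact at every other vertex.

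First I would record two balance identities. Summing Kirchhoff's law over all of $V_2$, every internal edge cancels because $V_2$ induces an all-positive graph, so $d_v+d_y=0$. Reading off signs at the individual cut edges, since $uv$ is positive its two half-edges point oppositely and $d_u=-d_v$; if $xy$ is positive then likewise $d_x=-d_y$, whereas if $xy$ is negative its half-edges agree and $d_x=d_y$. Combining these with $d_v=-d_y$ yields the compatibility I need: $d_u=-d_x$ when $\sigma(xy)=1$, and $d_u=d_x$ when $\sigma(xy)=-1$. In either case the common absolute value equals $\phi(uv)=\phi(xy)$, the (necessarily equal) flow values on the two cut edges.

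Then I would build the flows. On $V_2$ keep $(D,\phi)$ on all internal edges and give the new positive edge $vy$ the value $|d_v|=\phi(uv)>0$, oriented so that it contributes $d_v$ at $v$ and $-d_v=d_y$ at $y$; this is possible precisely because a positive edge contributes opposite amounts at its two ends and $d_v+d_y=0$. The result satisfies Kirchhoff's law everywhere and is nowhere-zero, so $(G_2,\sigma_{G_2})$ is flow-admissible. On $V_1$ keep $(D,\phi)$ on all internal edges and assign the new edge $ux$ of sign $\sigma(xy)$ the value $\phi(uv)>0$. When $\sigma(xy)=1$ the edge $ux$ is positive and must supply opposite amounts at $u$ and $x$, matching $d_u=-d_x$; when $\sigma(xy)=-1$ the edge $ux$ is negative, so as an introverted or extroverted edge it supplies equal amounts at both ends, matching $d_u=d_x$. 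Orienting it accordingly restores Kirchhoff's law at $u$ and $x$ and yields a nowhere-zero flow on $(G_1,\sigma_{G_1})$.

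The only genuine obstacle is that a single new edge must correct the deficits at both of its endpoints simultaneously, and this is exactly what the two design choices of the reduction guarantee: making $vy$ positive is compatible with the cut identity $d_v+d_y=0$ forced by the all-positive side, while setting $\sigma(ux)=\sigma(xy)$ matches the parity of the half-edge contributions at $u$ and $x$. I would finally dispose of the degenerate possibilities $u=x$ or $v=y$, where a new edge becomes a loop: these are either excluded by the standing assumptions on the cut or handled directly, since in those cases the two relevant deficits cancel on their own. I note in passing that this construction in fact gives $F(G_i,\sigma_{G_i})\le F(G,\sigma)$, which is stronger than mere flow-admissibility.
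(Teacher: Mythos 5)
Your proof is correct, but it takes a genuinely different route from the paper's. The paper argues structurally via the signed-circuit characterization of Lemma~\ref{flow-admissible}: for the all-positive piece it observes that a bridge of $(G_2,\sigma_{G_2})$ would be a bridge of $(G,\sigma)$ with an all-positive side, and for $(G_1,\sigma_{G_1})$ it shows every edge lies in a signed circuit by taking a signed circuit of $(G,\sigma)$ through that edge and, when it crosses the cut, replacing its $u$-$x$-path through the other side by the new edge $ux$ (legitimate because $\sigma(ux)=\sigma(xy)$ and the remainder of that path is all-positive). You instead restrict an all-positive nowhere-zero integer flow of $(G,\sigma)$ and repair the boundary deficits with the new edges; the identities $d_v+d_y=0$ (from summing Kirchhoff's law over the all-positive side, where positive internal edges cancel) and $d_u=\mp d_x$ according to $\sigma(xy)$ are exactly what is needed, and they also explain \emph{why} the reduction assigns $vy$ sign $+1$ and $ux$ sign $\sigma(xy)$. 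Your construction buys the stronger quantitative fact $F(G_i,\sigma_{G_i})\le F(G,\sigma)$, which the paper does not record (it only needs flow-admissibility here, and goes in the opposite direction -- assembling flows on the pieces into a flow on $G$ -- separately, via Lemma~\ref{lemma:n2edgecut}). One small imprecision in your closing remark: in the degenerate case $u=x$ with $\sigma(xy)=-1$ the two deficits at $u$ do not cancel (they sum to $2d_u$); the repair still works because the negative loop $ux$ contributes equal amounts through both half-edges, but it is the loop, not cancellation, that saves you there.
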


\begin{proof}
Let $(G_1,\sigma_{G_1})$ and all-positive $(G_2,\sigma_{G_2})$ be the resulting signed graphs obtained from the 2-edge-cut reduction of $(G,\sigma)$ with respect to $\{uv,xy\}$. Suppose first that $(G_2,\sigma_{G_2})$ is not flow-admissible. Then $(G_2,\sigma_{G_2})$ contains a bridge, which is also a bridge of $(G,\sigma)$ whose removal yields to an all-positive component of $(G,\sigma)$.  By Lemma~\ref{flow-admissible} $(G,\sigma)$ is not flow-admissible, a contradiction. 

Let $e$ be an edge of $(G_1,\sigma_{G_1})$. By Lemma~\ref{flow-admissible} it is enough to show that $e$ belongs to a signed circuit $C_1$ of $(G_1,\sigma_{G_1})$. Since $(G,\sigma)$ is flow-admissible, there exists a signed circuit $C$ of  $(G,\sigma)$ containing $e$. If $E(C)\subseteq E(G_1)$, then $C_1:=C$, and we are done. Otherwise, $C$ contains at least one of $\{uv,xy\}$. Since all negative edges belong to the component of $G-\{uv,xy\}$ containing $e$, $C$ must contain both of $\{uv,xy\}$. Therefore $C$ contains a $u$-$x$-path $P$ such that $E(P)\cap E(G_1)=\emptyset$. Moreover, $xy$ has the same sign as $ux$ and $P-xy$ is all-positive. Therefore replacing $P$ by $ux$ in $C$ is the desired signed circuit $C_1$ of $(G_1,\sigma_{G_1})$.
\end{proof}

\begin{lemma}\label{lemma:n2edgecut} Let $(G_1,\sigma_{G_1})$ and $(G_2,\sigma_{G_2})$ be the resulting graphs of the 2-edge-cut reduction of $(G,\sigma)$ with respect to a 2-edge-cut $\{uv, xy\}$. Let $k>0$ be integer, and let, for $i=1,2$, $(G_i,\sigma_{G_i})$ admit a nowhere-zero $k$-flow $(D_i,\phi_i)$ such that $ux$ is oriented from $u$ to $x$ and $h_v(vy)$ is oriented towards $v$. If $\phi_1(ux)=\phi_2(vy)$, then $F(G,\sigma)\leq k$.
\end{lemma}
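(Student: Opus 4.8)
The plan is to reverse the reduction: I will glue the flows $(D_1,\phi_1)$ and $(D_2,\phi_2)$ along the reinstated cut edges $uv$ and $xy$ to produce a nowhere-zero $k$-flow on $(G,\sigma)$. Write $V_1$ for the vertex set of $G_1$ (the side of the cut containing $u$ and $x$) and $V_2$ for that of $G_2$ (containing $v$ and $y$). Since $G$ is recovered from $G_1\cup G_2$ by deleting the auxiliary edges $ux$ and $vy$ and inserting $uv$ and $xy$, it suffices to define a flow $(D,\phi)$ that agrees with $(D_i,\phi_i)$ on the edges lying inside $V_i$ and assigns suitable orientations and values to the two reinstated edges.

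First I would set $\phi:=\phi_i$ and $D:=D_i$ on every edge with both ends in $V_i$. For the two new edges I would transplant the half-edges of the deleted ones: orient and value $uv$ so that its half-edge at $u$ coincides with $h_u(ux)$ and its half-edge at $v$ coincides with $h_v(vy)$, and orient and value $xy$ so that its half-edge at $x$ coincides with $h_x(ux)$ and its half-edge at $y$ coincides with $h_y(vy)$; in both cases the value is $\phi_1(ux)=\phi_2(vy)$. With this choice Kirchhoff's law is immediate: at every vertex of $V_i$ other than the cut endpoints nothing has changed, and at each of $u,v,x,y$ the single reinstated half-edge contributes exactly what the deleted half-edge contributed, so the vertex balance inherited from $(D_i,\phi_i)$ is preserved.

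The step I expect to be the main obstacle is checking that these transplanted half-edges actually assemble into legal bidirected edges of the prescribed signs, that is, that $uv$ becomes a consistent positive edge and $xy$ a consistent edge of sign $\sigma(xy)$. Marking a half-edge by $+1$ if it points out of its vertex and by $-1$ if it points in, an edge $e=pq$ is consistent with sign $\sigma(e)$ precisely when the product of the markers of $h_p(e)$ and $h_q(e)$ equals $-\sigma(e)$. The hypotheses are tailored to make this work: ``$ux$ oriented from $u$ to $x$'' fixes the marker of $h_u(ux)$ to $+1$, whence the marker of $h_x(ux)$ is $-\sigma(ux)=-\sigma(xy)$; and ``$h_v(vy)$ oriented towards $v$'' fixes the marker of $h_v(vy)$ to $-1$, whence, since $vy$ is positive, the marker of $h_y(vy)$ is $+1$. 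For $uv$ (which is positive, as the reduction is applied only when $uv$ is positive) the product of the inherited markers is $(+1)(-1)=-1=-\sigma(uv)$, and for $xy$ the product is $(-\sigma(xy))(+1)=-\sigma(xy)$; both are exactly what consistency demands. It is here that the sign rule $\sigma(ux)=\sigma(xy)$ built into the reduction is essential.

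Finally I would observe that every flow value used is a value of $\phi_1$ or $\phi_2$, the new edges carrying $\phi_1(ux)=\phi_2(vy)$, so $(D,\phi)$ is nowhere-zero with every value in $(-k,k)$. Hence $(D,\phi)$ is a nowhere-zero $k$-flow on $(G,\sigma)$, and therefore $F(G,\sigma)\le k$.
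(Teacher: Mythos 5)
Your proof is correct and follows essentially the same route as the paper's: glue the two flows, keep them unchanged inside each part, and place the common value $\phi_1(ux)=\phi_2(vy)$ on the reinstated edges $uv$ and $xy$ with orientations inherited from the auxiliary edges $ux$ and $vy$. Your half-edge-transplant formulation is if anything more careful than the paper's (which simply prescribes the orientations and declares the result ``clearly'' a flow), since it makes explicit both the preservation of Kirchhoff's law at $u,v,x,y$ and the sign-consistency of $xy$ whether $\sigma(xy)$ is positive or negative.
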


\begin{proof}
We will define a flow on $(G,\sigma)$ directly. Let $D$ be an orientation of the edges of $(G,\sigma)$ such that $D(e)=D_i(e)$ for every edge $e\in E(G_i)\cap E(G)$. Let $uv$ be oriented from $u$ to $v$, $h_x(xy)$ be oriented towards $x$, and $h_y(xy)$ be oriented towards $y$ if and only if $h_y(vy)$ is oriented towards $y$.
 We define $\phi$ as follows: $\phi(e)=\phi_i(e)$ for every $e\in E(G_i)\cap E(G)$ and $\phi(uv)=\phi(xy)=\phi_1(ux)$. Clearly, $(D,f)$ is a nowhere-zero $k$-flow of $(G,\sigma)$.
\end{proof}

For a signed graph $(G,\sigma)$ with two negative edges we say that an all-positive 2-edge-cut $X$ \emph{separates the negative edges} if the negative edges belong to different components of $G-X$. We note that we will not use an equivalent of a 2-edge-cut reduction for 2-edge-cuts that separate negative edges, because the resulting signed graphs may not be flow-admissible. 

An idea to reduce non-separating cuts of size less than 3 appeared first in Bouchet's work (see Proposition 4.2.~in~\cite{Bouchet_1983}). However, his reduction uses contraction of a positive edge, which cannot be used in our paper -- contraction of an edge of a signed graph from a particular class (e.g. bipartite) may result in a signed graph that does not belong to the same class.

\section{Nowhere-zero 4-flows}

The following lemma was proven by Sch\"onberger \cite{Sch}.

\begin{lemma}[\cite{Sch}]\label{lemma:matchingwithe}
If $G$ is a bridgeless cubic graph and $e$ is an edge of $G$, then $G$ has a $1$-factor that contains $e$.
\end{lemma}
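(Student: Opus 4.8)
The plan is to reduce the statement to a single application of Tutte's $1$-factor theorem. Writing $e=xy$, I would first observe that $G$ has a perfect matching containing $e$ if and only if the graph $H=G-\{x,y\}$ obtained by deleting both endpoints of $e$ has a perfect matching: any perfect matching $M'$ of $H$ yields the perfect matching $M'\cup\{e\}$ of $G$, and conversely a perfect matching of $G$ through $e$ restricts to one of $H$. So it suffices to verify the Tutte condition $o(H-S)\le |S|$ for every $S\subseteq V(H)$, where $o(\cdot)$ denotes the number of components having an odd number of vertices.

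Next I would transfer this condition back to $G$. For a set $S\subseteq V(H)$ put $T=S\cup\{x,y\}$; since $x,y\notin V(H)$ we have $|T|=|S|+2$ and $H-S=G-T$, so the required inequality becomes $o(G-T)\le |T|-2$ for every $T\subseteq V(G)$ with $\{x,y\}\subseteq T$. The point of including $\{x,y\}$ in $T$ is that the edge $e=xy$ then lies \emph{inside} $T$.

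The heart of the argument is a counting and parity estimate exploiting that $G$ is cubic and bridgeless. For each odd component $C$ of $G-T$, every edge leaving $C$ goes to $T$, and the number of such edges has the same parity as $\sum_{v\in C}\deg_G(v)=3|C|$, hence is odd; bridgelessness forces this edge-cut to have size at least $2$, so in fact at least $3$. Summing over all odd components and comparing with the total number $3|T|-2m$ of edges leaving $T$ (where $m\ge 1$ counts the edges inside $T$, and $m\ge 1$ precisely because $e$ lies inside $T$), I obtain $3\,o(G-T)\le 3|T|-2$, whence $o(G-T)\le |T|-1$. Finally, since $3|V(G)|$ is even we have $|V(G)|$ even, so $o(G-T)\equiv |V(G)|-|T|\equiv |T|\pmod 2$; thus $|T|-o(G-T)$ is a nonnegative even integer, and together with $o(G-T)\le |T|-1$ this sharpens to $o(G-T)\le |T|-2$, which is exactly the Tutte condition for $H$.

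The step I expect to be the crux is this last improvement from $|T|-1$ to $|T|-2$: the naive edge count alone only delivers the off-by-one bound, and closing the gap genuinely requires \emph{both} the internal edge $e$ (supplying the ``$-2$'' in $3|T|-2m$) and the parity identity $o(G-T)\equiv |T|\pmod 2$ forced by cubicity. Once these two ingredients are in place, the remainder is routine bookkeeping with Tutte's theorem.
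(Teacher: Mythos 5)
Your argument is correct. Note, however, that the paper does not prove Lemma~\ref{lemma:matchingwithe} at all: it is quoted as a classical result with a citation to Sch\"onberger's 1934 paper, so there is no in-paper proof to compare against. What you have written is the standard modern derivation from Tutte's $1$-factor theorem, and every step checks out: the equivalence between a perfect matching of $G$ through $e=xy$ and a perfect matching of $H=G-\{x,y\}$; the translation of the Tutte condition into $o(G-T)\le |T|-2$ for $T\supseteq\{x,y\}$; the count that each odd component of $G-T$ sends an odd number, hence (by bridgelessness) at least $3$, of edges into $T$, against at most $3|T|-2$ edges leaving $T$ because $e$ lies inside $T$; and the parity identity $o(G-T)\equiv|T|\pmod 2$ (from $|V(G)|$ even) that upgrades $o(G-T)\le|T|-1$ to $o(G-T)\le|T|-2$. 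You correctly identify the last step as the crux --- without the internal edge $e$ and the parity observation the count only gives $o(G-T)\le|T|$, which proves Petersen's theorem but not the refinement through a prescribed edge. This is a complete, self-contained proof of the cited lemma.
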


\begin{lemma}\label{lemma:1factor}
Let $G$ be a cubic bipartite graph, and let $e,f \in E(G)$.  If any 3-edge-cut contains at most one edge of $\{e,f\}$, then there exists a 1-factor of $G$ that contains both, $e$ and $f$.
\end{lemma}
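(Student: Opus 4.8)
The plan is to reduce the problem to a Hall-type matching condition and then convert a failure of that condition into a forbidden $3$-edge-cut. Write $X,Y$ for the two sides of the bipartition of $G$; since $G$ is cubic, $3|X|=|E(G)|=3|Y|$, so $|X|=|Y|$. First I would dispose of the adjacent case: if $e$ and $f$ shared a vertex $v$, the three edges incident with $v$ would form a $3$-edge-cut containing both $e$ and $f$, contradicting the hypothesis. Hence I may write $e=x_1y_1$ and $f=x_2y_2$ with $x_1\neq x_2$ in $X$ and $y_1\neq y_2$ in $Y$. The crucial reformulation is that a $1$-factor of $G$ containing both $e$ and $f$ exists \emph{if and only if} the bipartite graph $G'=G-\{x_1,x_2,y_1,y_2\}$ has a perfect matching, and because $|X|=|Y|$ this is equivalent to the existence of a matching saturating $X'=X\setminus\{x_1,x_2\}$.

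Then I would argue by contradiction via Hall's theorem. If no such matching exists, there is a set $S\subseteq X'$ with $|N_{G'}(S)|<|S|$; put $T=N_{G'}(S)\subseteq Y\setminus\{y_1,y_2\}$ and $\delta=|S|-|T|\geq 1$. The heart of the proof is an edge count in $G$. Since the only vertices of $Y$ deleted in forming $G'$ are $y_1,y_2$, one has $N_G(S)\subseteq B:=T\cup\{y_1,y_2\}$, so the $3|S|$ edges at $S$ all land in $B$. Counting the $3|B|=3|T|+6$ edges incident with $B$, exactly $q=3|B|-3|S|=6-3\delta$ of them leave $B$ for $X\setminus S$.

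The decisive observation is that both $e$ and $f$ are among these $q$ edges, since each joins a vertex of $\{y_1,y_2\}\subseteq B$ to a vertex of $\{x_1,x_2\}\subseteq X\setminus S$. Thus $q\geq 2$, which forces $\delta=1$ and $q=3$. But then the edge boundary of $A:=S\cup B$ consists of exactly these $q=3$ edges (every edge at $S$ stays inside $A$), so $\partial A$ is a $3$-edge-cut of $G$ containing both $e$ and $f$ -- contradicting the hypothesis. Consequently $G'$ has a perfect matching, and adjoining $e$ and $f$ produces the required $1$-factor.

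I expect the main obstacle to be making the edge-count bookkeeping exact, so that a deficient Hall set yields a cut of size precisely $3$ rather than merely an inequality: one must verify that all $3|S|$ edges from $S$ land in $B=T\cup\{y_1,y_2\}$ and that $y_1,y_2\notin T$, so that $q=6-3\delta$ is an equality. Once this identity is exact, the lower bound $q\geq2$ coming from $e$ and $f$ simultaneously pins down $\delta=1$ and certifies that $\partial A$ has size $3$; the only remaining care is to check that $A$ and its complement are both nonempty (they contain $y_1$ and $x_1$ respectively), so that $\partial A$ is a genuine edge-cut.
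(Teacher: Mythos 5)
Your proof is correct, and it takes a genuinely different route from the paper's. The paper first disposes of the case where $\{e,f\}$ is a $2$-edge-cut via a $3$-edge-coloring, then builds an auxiliary cubic graph $G'$ by replacing $e=uv$ and $f=xy$ with the within-part edges $ux$ and $vy$, invokes Sch\"onberger's theorem (every bridgeless cubic graph has a $1$-factor through any prescribed edge) to get a $1$-factor of $G'$ through $ux$, and then uses a bipartite parity count on $G'$ minus six vertices to force that $1$-factor to contain $vy$ as well; the cut hypotheses enter only to certify that $G'$ is connected and bridgeless. You instead delete the four endpoints of $e$ and $f$ and apply Hall's theorem directly, converting a deficient set $S$ into the edge-cut $\partial(S\cup T\cup\{y_1,y_2\})$, whose size $6-3\delta$ together with the fact that $e$ and $f$ both cross it pins down $\delta=1$ and yields a forbidden $3$-edge-cut. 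Your computation is exact: all $3|S|$ edges from $S$ do land in $B=T\cup\{y_1,y_2\}$, the sets $T$ and $\{y_1,y_2\}$ are disjoint by construction of $G'$, and both sides of the cut are nonempty. What your approach buys is self-containedness (Hall's theorem rather than Sch\"onberger's lemma, which the paper must import as a separate result), a uniform treatment that needs no special case for $\{e,f\}$ being a $2$-edge-cut, and a transparent explanation of why the $3$-edge-cut hypothesis is exactly the right one; what the paper's approach buys is brevity given that Sch\"onberger's lemma is already stated and needed elsewhere. One cosmetic point: make sure the notion of ``$3$-edge-cut'' you produce (the edge boundary of a vertex set, not necessarily a minimal cut) matches the paper's usage -- it does, since the paper itself treats the star of a vertex as a trivial $3$-edge-cut.
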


\begin{proof}
Let $U$ and $V$ be the partite sets of $G$. Let $e=uv$ and $f=xy$ be two edges of $G$ such that $u,x\in U$, $v,y\in V$. If $e$ and $f$ are adjacent, they belong to a (trivial) 3-edge-cut of $G$, a contradiction. Hence $e$ and $f$ are non-adjacent.

If $e$ and $f$ form a 2-edge-cut, then they must belong to the same color class of a 3-edge-coloring of $G$ and hence, there is a 1-factor that contains $e$ and $f$.
In what follows, we assume that $\{e,f\}$ is not a 2-edge-cut.

Let $G'$ be the graph that is constructed from $G - \{e,f\}$ by adding new edges $e'=ux$ and $f'=vy$. Then $G'$ is cubic and bridgeless, because $e$ and $f$ do not belong to any 3-edge-cut of $G$. Thus, by Lemma~\ref{lemma:matchingwithe} there exists a 1-factor $F'$ of $G'$ containing $e'$. We claim that $F'$ contains $f'$. Suppose to the contrary that $f'\notin F'$. Then there exist $v'$ and $y'$ from $U$ such that $vv'$ and $yy'$ are in $F'$.  {The} graph $G'-\{u, x, v, v', y, y'\}$ is bipartite with partite sets of sizes $|U|-4$ and $|V|-2=|U|-2$. Note that such a graph does not have any $1$-factor, which is a contradiction with existence of $F'$. Thus $f'$ must belong to $F'$. In that case $F=F'\cup\{e,f\}-\{e',f'\}$ is a 1-factor of $G$ that contains $e$ and $f$.
\end{proof}

\begin{lemma}\label{prop:1factorimplies4flow}
Let $(G,\sigma)$ be a signed cubic graph with $N_{\sigma}=\{n_1,n_2\}$. If $G$ has a 3-edge-coloring such that
$n_1$ and $n_2$ belong to the same color class, then $(G,\sigma)$ admits a nowhere-zero $4$-flow $(D,\phi)$ such that $\phi(n_1)=\phi(n_2)=2$.
\end{lemma}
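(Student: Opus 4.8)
The plan is to reduce the signed problem to an all-positive one, solve the all-positive problem with the classical even-cycle construction, and translate back. Write the given $3$-edge-colouring as a partition into perfect matchings $M_1,M_2,M_3$ with $n_1,n_2\in M_3$; since two edges of the same colour class are non-adjacent, $n_1=a_1b_1$ and $n_2=a_2b_2$ meet four distinct vertices. Form the all-positive cubic graph $G^+=(G-\{n_1,n_2\})+\{e,f\}$ with $e=a_1a_2$ and $f=b_1b_2$, and colour $e,f$ with colour $3$. This is a proper $3$-edge-colouring of $G^+$, because at each of $a_1,b_1,a_2,b_2$ one colour-$3$ edge is removed and one colour-$3$ edge is added; hence $G^+$ is a bridgeless cubic graph with $e,f$ in a single colour class $M_3^+$.

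First I would build the standard nowhere-zero $4$-flow on $(G^+,\texttt{1})$. The subgraphs $M_1^+\cup M_2^+$ and $M_1^+\cup M_3^+$ are disjoint unions of even cycles; orienting each cycle coherently and assigning value $1$ yields $\mathbb{Z}$-flows $F_{12}$ and $F_{13}$, and $\phi^+=F_{12}+2F_{13}$ is a nowhere-zero $4$-flow (edges of $M_2^+$ carry $\pm1$, of $M_3^+$ carry $\pm2$, of $M_1^+$ carry $\pm1$ or $\pm3$). In particular $e$ and $f$, lying in $M_3^+$, carry value $\pm2$. To translate $\phi^+$ into a flow on $(G,\sigma)$ I would reinsert $n_1,n_2$ and let the half-edge of $n_i$ at each of its endpoints inherit the orientation and value ($=2$) of the connector $e$ or $f$ incident there. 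A short verification of Kirchhoff's law at $a_1,b_1,a_2,b_2$ then shows this is a nowhere-zero $4$-flow on $(G,\sigma)$ with $\phi(n_1)=\phi(n_2)=2$, \emph{provided} $e$ and $f$ are oriented consistently, that is, so that $n_1$ becomes introverted and $n_2$ extroverted (or vice versa) rather than two ordinary directed edges.

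The consistency of these two orientations is the crux. Since reversing an edge together with its value, and independently reversing the coherent orientation of any single even cycle, both produce valid flows, I can always force consistency when $e$ and $f$ lie on \emph{different} cycles of $M_1^+\cup M_3^+$; if they are forced onto one cycle I would instead double the pairing $M_2^+\cup M_3^+$. The genuinely hard case is when $e$ and $f$ are ``inconsistently linked'' in both pairings, which does occur (for instance for two antipodal vertical edges of the cube), and there neither colour-class construction suffices. For this case the plan is to replace the carrier $M_1^+\cup M_3^+$ of the value-$2$ part by a nearby even subgraph $H$, obtained as the symmetric difference of $M_1^+\cup M_3^+$ with a suitable cycle of $M_1^+\cup M_2^+$, chosen so that $H$ still contains $e,f$ but places them on different cycles, thereby restoring independent control of their orientations; the remaining edges are then covered by a suitable $\pm1$-flow, keeping every value in $\{1,2,3\}$. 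Proving that such a correcting cycle always exists, using the $2$-edge-connectivity of $G^+$, is the main obstacle I expect to cost the most effort.
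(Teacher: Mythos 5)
Your first step is sound and matches the paper's: from the $3$-edge-colouring one gets an everywhere-positive nowhere-zero $4$-flow in which the colour class of $n_1,n_2$ carries value $2$, and you correctly isolate the real difficulty, namely that the two negative edges must end up one extroverted and one introverted, which amounts to an orientation-consistency condition on the two value-$2$ edges. But the proposal does not close this gap. Your only tools for forcing consistency are (i) reversing whole coherently oriented cycles of $M_1^+\cup M_3^+$ and (ii) switching to the pairing $M_2^+\cup M_3^+$, and you concede that both can fail simultaneously; the fallback (perturbing the even $2$-factor by a symmetric difference with a cycle of the other pairing so that $e$ and $f$ land on different cycles) is exactly the unproved step, and you give no argument that such a correcting cycle exists. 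As written, the proof is incomplete precisely at its crux.

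The missing idea is Lemma~\ref{lemma:directeduvpath}: in an everywhere-positive integer flow there is a \emph{directed} path between any two prescribed vertices, and this gives pointwise control over orientations that cycle-flipping cannot. The paper works directly in $G$ (no auxiliary graph $G^+$ is needed): start from the positive $4$-flow on $(G,\texttt{1})$ with $\phi(n_1)=\phi(n_2)=2$, say $n_1=u_1u_2$ and $n_2=v_1v_2$ directed towards $u_2$ and $v_2$. Take a directed $v_2$-$u_1$-path $P$; since $P$ starts at $v_2$ and ends at $u_1$ it cannot traverse $n_2$ or $n_1$. Now reverse the half-edges $h_{u_1}(n_1)$ and $h_{v_2}(n_2)$ (making $n_1$ introverted and $n_2$ extroverted), reverse $P$, and set $\phi'(x)=4-\phi(x)$ on $E(P)$. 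This restores Kirchhoff's law at $u_1$ and $v_2$ (the $\pm 4$ discrepancies cancel), preserves it at the interior vertices of $P$, keeps all values in $\{1,2,3\}$, and leaves the value $2$ on both negative edges. I recommend you replace your case analysis by this rerouting argument; your cut-and-rejoin construction of $G^+$ then becomes unnecessary.
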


\begin{proof} Let $c:\ E(G) \to \{c_1,c_2,c_3\}$ be a $3$-edge-coloring such that $c(n_1)=c(n_2)=c_2$.
It is well known and easy to see that $(G,\texttt{1})$ has a nowhere-zero 4-flow $(D,\phi)$ such that $\phi(x) > 0$ for every $x \in E(G)$ and $\phi(y) = 2$ if $y \in c^{-1}(c_2)$.
Let $n_1 = u_1u_2$ and $n_2 = v_1v_2$ be directed towards $u_2$ and towards $v_2$, respectively. By Lemma~\ref{lemma:directeduvpath}, there is a directed path $P$ from $v_2$ to $u_1$. Moreover, $P$ contains neither $n_1$ nor $n_2$. To obtain an orientation $D'$ of $(G,\sigma)$ reverse the orientation of the half-edges $h_{u_1}(n_1)$ and $h_{v_2}(n_2)$ and the edges of $P$, and
 leave the orientation of all other (half-)edges unchanged. Let $\phi'(x) = 4- \phi(x)$ if $x \in E(P)$, and $\phi'(x) = \phi(x)$ otherwise. It is easy to check that $(D',\phi')$ is a desired nowhere-zero 4-flow on $(G,\sigma)$.
\end{proof}

\begin{theorem}\label{thm:cubbip}
Let $(G,\sigma)$ be a flow-admissible signed cubic graph with $|N_{\sigma}|=2$. If $G$ is bipartite, then $F(G,\sigma)\leq 4$.
\end{theorem}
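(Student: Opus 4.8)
The plan is to reduce the bipartite case to the situation already handled by Lemma~\ref{prop:1factorimplies4flow}. That lemma says: if the two negative edges $n_1,n_2$ lie in a common color class of some proper $3$-edge-coloring of $G$, then $(G,\sigma)$ has a nowhere-zero $4$-flow. Since a cubic bipartite graph is always $3$-edge-colorable (its color classes are $1$-factors, and the three $1$-factors correspond to a proper edge-coloring), the entire task is to produce a $3$-edge-coloring in which $n_1$ and $n_2$ share a color. Equivalently, by the correspondence between color classes and $1$-factors, it suffices to find a single $1$-factor of $G$ containing both $n_1$ and $n_2$; extending this $1$-factor to a full proper $3$-edge-coloring is automatic for bipartite cubic graphs (the complement of a $1$-factor is a $2$-factor consisting of even cycles, which is properly $2$-edge-colorable).

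First I would invoke flow-admissibility to control the obstructions. The natural tool for producing a $1$-factor through two prescribed edges is Lemma~\ref{lemma:1factor}, whose hypothesis is exactly that no $3$-edge-cut contains both $n_1$ and $n_2$. So the key step is to verify that hypothesis. This is where flow-admissibility enters: by Corollary~\ref{cor:3edgecut}, the two negative edges of a flow-admissible signed graph with $|N_\sigma|=2$ cannot lie together in any $3$-edge-cut. That corollary is precisely the statement that any $3$-edge-cut contains at most one of $\{n_1,n_2\}$, which is the assumption of Lemma~\ref{lemma:1factor}.

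I would then chain the pieces together as follows. Given the flow-admissible bipartite cubic $(G,\sigma)$ with $N_\sigma=\{n_1,n_2\}$, apply Corollary~\ref{cor:3edgecut} to conclude no $3$-edge-cut contains both negative edges. Feed this into Lemma~\ref{lemma:1factor} with $e=n_1$, $f=n_2$ to obtain a $1$-factor $F$ containing both. Observe $F$ extends to a proper $3$-edge-coloring $c$ of $G$ in which $F$ is one color class, so $c(n_1)=c(n_2)$. Finally apply Lemma~\ref{prop:1factorimplies4flow} to get a nowhere-zero $4$-flow; hence $F(G,\sigma)\leq 4$.

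The only substantive point is checking that the stated lemmas apply verbatim, so the proof is essentially a citation chain rather than a new construction. The one place warranting a sentence of care is the passage from "$1$-factor containing $n_1,n_2$" to "$3$-edge-coloring with $n_1,n_2$ in one class," but in a bipartite cubic graph this is routine: removing a $1$-factor leaves a $2$-regular bipartite graph, i.e.\ a disjoint union of even cycles, each of which is $2$-edge-colorable. I do not anticipate a genuine obstacle here; the content of the theorem is really carried by Lemma~\ref{lemma:1factor} (which in turn rests on Sch\"onberger's Lemma~\ref{lemma:matchingwithe} and a bipartite counting argument) together with the flow-admissibility criterion of Corollary~\ref{cor:3edgecut}.
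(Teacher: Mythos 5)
Your proof is correct and follows exactly the paper's own argument: Corollary~\ref{cor:3edgecut} supplies the hypothesis of Lemma~\ref{lemma:1factor}, which yields a $1$-factor through both negative edges, and Lemma~\ref{prop:1factorimplies4flow} then gives the nowhere-zero $4$-flow. Your extra remark on extending the $1$-factor to a $3$-edge-coloring (the complementary $2$-factor of a bipartite cubic graph consists of even cycles) is a correct and sensible addition, as Lemma~\ref{prop:1factorimplies4flow} is stated in terms of a coloring rather than a $1$-factor.
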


\begin{proof}
Let $N_{\sigma}=\{n_1, n_2\}$. Since $(G,\sigma)$ is flow-admissible, $n_1$ and $n_2$ do not belong to any $3$-edge-cut by Corollary~\ref{cor:3edgecut}. Thus by Lemma~\ref{lemma:1factor}, $G$ has a 1-factor containing $n_1$ and $n_2$. By Lemma~\ref{prop:1factorimplies4flow}, $F(G,\sigma)\leq 4$.
\end{proof}

The bound given in Theorem~\ref{thm:cubbip} is tight. It is achieved for example on $(K_{3,3},\sigma)$, where the two negative edges are independent (see \cite{MR}).
It is not possible to extend the result of Theorem~\ref{thm:cubbip} to cubic bipartite graphs with any number of negative edges. For example, a circuit of length $6$, where every second edge is doubled and one of the parallel edges is negative for every pair of parallel edges and all the other edges are positive has flow number $6$ (see \cite{SS}).

We would like to note that a choice of flow value on negative edges is important. The signed graph of Figure~\ref{fig} is an example of a signed graph that does not admit a nowhere-zero 4-flow that assigns 1 to negative edges even though it admits a nowhere-zero 4-flow according to Theorem~\ref{thm:cubbip}.

\begin{figure}[h]
\begin{center}
\includegraphics[width=6cm]{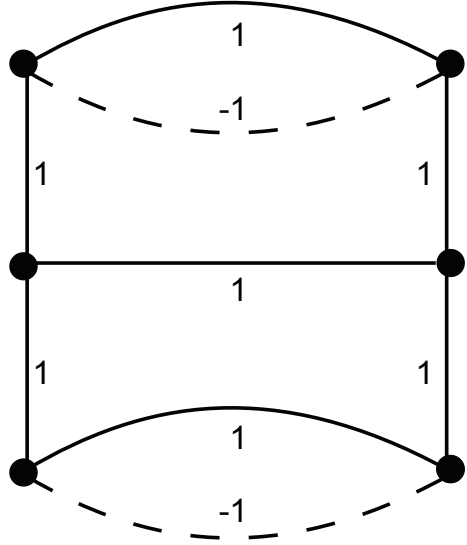}
\caption{{A signed graph for which a choice of flow value on negative edges is important}}\label{fig}
\end{center}
\end{figure}

\section{Nowhere-zero 6-flows}

In this section we prove that Bouchet's conjecture is true for signed graphs with two negative edges where the underlying graph has additional properties. Our first result is on the graphs with bridges, for which we need the following lemma. 

Let $D$ be an orientation of a graph $G$ and $\phi:\ E(G) \to A$ be a function to an abelian group $A$. We say that an \emph{outflow} at a vertex $v$ of $G$ with respect to $(D,\phi)$ is $\sum_{e\in\delta^+(v)} \phi(e)- \sum_{e\in\delta^-(v)} \phi(e)$, where $\delta^+(v)$ ($\delta^-(v)$, respectively) is the \emph{set of outgoing edges} (\emph{incoming edges}, respectively) incident to $v$. 

\begin{lemma}\label{magictrick}
Let $G$ be a graph and let $v$ be a vertex of $G$ of degree 3 incident to $e_1,e_2,e_3$. Let $D$ be an orientation of $G$ such that either $\delta^+(v)=\{e_1,e_2,e_3\}$ or $\delta^-(v)=\{e_1,e_2,e_3\}$.  
If $G$ admits a nowhere-zero $\mathbb{Z}_6$-flow $(D,\phi)$ such that $\phi(e_1)=1$, $\phi(e_2)=x$ and $\phi(e_3)=-1-x$ (for $1\leq x \leq 4$), then $G$ admits a nowhere-zero $6$-flow $(D,\phi')$ such that $\phi'(e_1)=1$, $\phi'(e_2)=x$ and $\phi'(e_3)=-1-x$.
\end{lemma}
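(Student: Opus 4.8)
The plan is to adapt the standard argument that a nowhere-zero $\mathbb{Z}_6$-flow lifts to a nowhere-zero integer $6$-flow (the Tutte equivalence quoted earlier), but to carry it out so that the three edges at $v$ are never touched. First I would record two elementary facts about the prescribed values. Since $1+x+(-1-x)=0$ \emph{as integers}, the net outflow of any orientation with values $1,x,-1-x$ on $e_1,e_2,e_3$ is exactly $0$ at $v$, regardless of whether $\delta^+(v)$ or $\delta^-(v)$ equals $\{e_1,e_2,e_3\}$; so $v$ imposes no condition beyond fixing these three values. Moreover $1$, $x\in\{1,2,3,4\}$ and $-1-x\in\{-5,-4,-3,-2\}$ all lie in $\{-5,\dots,-1,1,\dots,5\}$ and are nonzero, hence are admissible in an integer $6$-flow.

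Next I would consider the finite, nonempty family $\mathcal{F}$ of integer functions $\psi$ on $E(G)$ with $\psi\equiv\phi\pmod 6$, with $1\le|\psi(e)|\le 5$ for all $e$, and with $\psi(e_1)=1$, $\psi(e_2)=x$, $\psi(e_3)=-1-x$. For every such $\psi$ the outflow $\partial\psi(w)$ is a multiple of $6$ at each vertex $w$ and equals $0$ at $v$. Pick $\psi\in\mathcal{F}$ minimizing $\mu(\psi)=\sum_w|\partial\psi(w)|$; showing $\mu(\psi)=0$ finishes the proof. Assume $\mu(\psi)>0$ and choose a vertex $s$ with $\partial\psi(s)>0$, so $\partial\psi(s)\ge 6$ and $s\neq v$. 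Orient each edge in the direction of its positive $\psi$-value, delete the three arcs arising from $e_1,e_2,e_3$, and let $S$ be the set of vertices reachable from $s$ in the resulting auxiliary digraph $A'$. If some $t\in S$ has $\partial\psi(t)<0$, then adding or subtracting $6$ along a path from $s$ to $t$ is legal precisely by the way $A'$ was oriented, keeps $\psi\in\mathcal{F}$ (the three fixed edges are avoided), and lowers $\mu$, contradicting minimality. Hence every vertex of $S$ has nonnegative outflow and $\sum_{w\in S}\partial\psi(w)\ge\partial\psi(s)\ge 6$.

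The crux is to bound this same sum from above through the cut $(S,V\setminus S)$. Every boundary edge other than $e_1,e_2,e_3$ has its $A'$-arc directed into $S$ (otherwise $S$ would be larger), which forces each such edge to contribute nonpositively to $\sum_{w\in S}\partial\psi(w)$. Because all edges at $v$ lie in $\{e_1,e_2,e_3\}$, the vertex $v$ is isolated in $A'$, so $v\notin S$; thus an edge $e_i$ contributes to the cut only when its other endpoint lies in $S$, and a short sign check in the two cases — edges all out of, or all into, $v$ — shows that the only possible positive contributions come from the edge of value $-1-x$, or from the edges of values $1$ and $x$, respectively. In either case the total positive contribution of $e_1,e_2,e_3$ is at most $1+x\le 5$, giving $6\le\sum_{w\in S}\partial\psi(w)\le 1+x\le 5$, a contradiction. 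I expect this numeric gap — that the three fixed edges contribute at most $1+x\le 5<6$ across the cut, which is exactly what the hypothesis $1\le x\le 4$ secures — to be the decisive point, and the only delicate bookkeeping to be verifying the sign of each $e_i$'s contribution in both orientation cases.
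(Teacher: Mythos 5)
Your proof is correct, and while it is built on the same engine as the paper's proof (the standard lifting of a nowhere-zero $\mathbb{Z}_6$-flow to an integer $6$-flow by repeatedly pushing $6$ units along a directed path from a vertex of positive excess to one of negative excess), the way you handle the three prescribed edges is genuinely different. The paper excludes only $e_1$ from the correcting paths; consequently the paths may run through $v$ via $e_2$ and $e_3$, and at the end the values on those two edges may have been flipped to $6-x$ and $5-x$, which the paper repairs by pushing flow around a directed cycle through $e_2$ and $e_3$ (found via Lemma~\ref{lemma:directeduvpath}). It also needs bridgelessness of $G$ to guarantee the correcting path exists. You instead freeze all three edges by deleting them from the auxiliary digraph, which makes $v$ isolated, and you close the argument with the quantitative observation that the frozen edges can contribute at most $1+x\le 5<6$ across the cut around the reachable set $S$ --- this is exactly where the hypothesis $1\le x\le 4$ does its work, and it replaces both the bridge argument and the final repair step. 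The only points you leave as ``short checks'' (nonemptiness of $\mathcal{F}$, $\partial\psi(v)=0$ in both orientation cases, and the sign of each $e_i$'s contribution to the cut) all verify routinely, so the argument is complete; arguably it is cleaner than the paper's, at the cost of being a single global minimization rather than an explicit iterative construction.
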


\begin{proof}
Let $(D,\phi)$ be an all-positive nowhere-zero $\mathbb{Z}_6$-flow on $G$ such that $\phi(e_1)=1$, $\phi(e_2)=x$ and $\phi(e_3)=1+x$ for a fixed $x\in\{1,2,3,4\}$. If $(D,\phi)$ is also a nowhere-zero $6$-flow, then we are done.

Otherwise $(D,\phi)$ is a nowhere-zero integer function such that the outflow at the vertices of $G$ is a multiple of 6. (Note that the total outflow in $\mathbb{Z}$ taken over all vertices of $G$ is 0, because $(D,\phi)$ is a nowhere-zero $\mathbb{Z}_6$-flow on $G$.) Since $(D,\phi)$ is not an integer flow, there are at least two vertices with non-zero outflow (taken~in~$\mathbb{Z}$). 

Let $w_1$ be a vertex with a positive outflow. We claim that there exists a vertex $w_2$ with a negative outflow such that there is a directed $w_1$-$w_2$-path not containing $e_1$. Suppose the opposite and let $W$ be a subset of $V(G)$ that contains $w_1$ and every vertex $w$ for which there is a directed $w_1$-$w$-path not containing $e_1$. Since $W$ does not contain any vertex with negative outflow, $V(G)-W$ is non-empty. Every edge between $W$ and $V(G)-W$ is oriented towards $W$ except, possibly, the edge $e_1$. By Kirchhoff's law, the total outflow from $V(G)-W$ must be negative, which is possible only when $e_1$ is the only edge between $W$ and $V(G)-W$, because $\phi$ is all-positive. But then $e_1$ is a bridge of a flow-admissible graph $G$, which is a contradiction. Therefore, there is a directed $w_1$-$w_2$-path $P$. To obtain a new nowhere-zero function $(D^*,\phi^*)$, reverse the orientation of the edges of $P$, leave the orientation of all other edges unchanged, and define $\phi^*(f)=\phi(f)$ for $f\notin P$, and  $\phi^*(f)=6-\phi(f)$ for $f\in P$. 
Note that $(D^*,\phi^*)$ is positive on every edge of $G$, and since $e_1\notin P$, $\phi^*(e_1)=1$. We iterate this process until the outflow at every vertex of $G$ is 0. (Note that this process is finite, because the sum of absolute values of the outflows in $\mathbb{Z}$ over all vertices decreases.) 

Let $(D^\#,\phi^\#)$ be the final nowhere-zero function. Since the outflow at every vertex is 0, $(D^\#,\phi^\#)$ is a nowhere-zero $6$-flow (which is also positive on every edge). If $\phi^\#(e_1)=1$, $\phi^\#(e_2)=x$, and $\phi^\#(e_3)=1+x$, then we are done. 
Otherwise $\phi^\#(e_1)=1$, $\phi^\#(e_2)=6-x$, and $\phi^\#(e_3)=5-x$. By Lemma~\ref{lemma:directeduvpath}, there is a directed $u_2$-$u_3$-path $Q$ in $G$, where $u_i\in e_i$, for $i=2,3$. Then $Q\cup e_3 \cup e_2$ is a directed cycle. Reversing the orientation $D^\#$ on $E(Q\cup e_3 \cup e_2)$ and replacing $\phi^\#(e)$ with $6-\phi^\#(e)$
on every edge $e\in Q\cup e_3 \cup e_2$ provides a desired nowhere-zero $6$-flow on~$G$.
\end{proof}

\begin{corollary} \label{6_Flow_1}
Let $G$ be a cubic graph and $f \in E(G)$. If $G$ is bridgeless, then $(G,\texttt{1})$ has a nowhere-zero 6-flow $(D,\phi)$, and we can choose the flow value $\phi(f)$.
\end{corollary}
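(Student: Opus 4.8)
The plan is to argue by induction on $|V(G)|$, with $3$-edge-connected cubic graphs serving as the base case and every other bridgeless cubic graph being reduced along a $2$-edge-cut. Two preliminary normalizations organize the whole argument. First, reversing the orientation of $f$ negates $\phi(f)$, so it suffices to realize every magnitude $|\phi(f)|\in\{1,2,3,4,5\}$. Second, if $z$ is an endvertex of $f$ and its three incident edges are oriented all outward, then their flow values sum to $0$ in $\mathbb{Z}_6$; hence prescribing $\phi(f)$ reduces to choosing a triple of nonzero elements of $\mathbb{Z}_6$ summing to $0$ in which $f$'s edge carries the target value while, crucially, one of the other two edges carries the value $1$ (so that Lemma~\ref{magictrick} becomes applicable).

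For the base case let $G$ be $3$-edge-connected and let $z$ be an endvertex of $f$, incident with $e_1,e_2,e_3$, all oriented outward. I would use the triple $(\phi(e_1),\phi(e_2),\phi(e_3))=(1,2,-3)$ to realize magnitudes $1,2,3$ on $f$ (taking $f=e_1,e_2,e_3$ respectively) and $(1,4,-5)$ to realize magnitudes $4,5$ (taking $f=e_2,e_3$). These are exactly the triples $(1,x,-1-x)$ with $x\in\{2,4\}$ demanded by Lemma~\ref{magictrick}, and both consist of nonzero entries summing to $0$. Theorem~\ref{JLPT} then produces a nowhere-zero $\mathbb{Z}_6$-flow on $G$ whose edges at $z$ carry precisely these three values, and Lemma~\ref{magictrick} converts it into an integer nowhere-zero $6$-flow with the same three values at $z$, in particular with the prescribed value on $f$. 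Reversing the orientation of $f$ supplies the opposite sign when required.

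For the inductive step suppose $G$ is not $3$-edge-connected, so it has a $2$-edge-cut $\{uv,xy\}$. Since $G$ is bridgeless the two cut edges have distinct endpoints on each side (otherwise the third edge at a common endpoint would be a bridge), so the $2$-edge-cut reduction adds a genuine edge to each side and yields two all-positive cubic graphs $G_1\ni ux$ and $G_2\ni vy$, each flow-admissible, hence bridgeless, by Observation~\ref{obser:2red}, and each with strictly fewer vertices than $G$ (both sides contain at least two vertices). If $f$ lies in the side containing $ux$, I would apply the inductive hypothesis there to fix $\phi(f)$ at the target while reading off the value $m$ it assigns to $ux$, then apply the inductive hypothesis to $G_2$ to force $\phi(vy)=m$; Lemma~\ref{lemma:n2edgecut} glues the two flows into a $6$-flow of $G$ that restricts to the first flow on the side of $f$, so $\phi(f)$ is as prescribed (symmetrically if $f$ lies on the other side). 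If instead $f\in\{uv,xy\}$, the glued flow assigns $f$ the common cut value $\phi_1(ux)=\phi_2(vy)$, so I would prescribe this common value to be the target on both $ux$ in $G_1$ and $vy$ in $G_2$.

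The heart of the matter, and the only real obstacle, is the non-$3$-edge-connected case: Theorem~\ref{JLPT} prescribes boundary values only under $3$-edge-connectivity, so across a $2$-edge-cut the control over $\phi(f)$ must be manufactured by gluing. What makes the induction close is that the property being proved is exactly the freedom to preassign the flow on a single edge, which is precisely the freedom needed to force the two partial flows to agree on the paired edges $ux$ and $vy$; thus the statement is self-strengthening. The remaining points are routine: verifying that the reduced graphs are genuinely smaller bridgeless cubic graphs, and checking that Lemma~\ref{lemma:n2edgecut} applies in the all-positive setting once the orientations of $ux$ and $vy$ are chosen as in its hypothesis and their flow values are matched.
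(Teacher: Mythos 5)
Your proof is correct and follows essentially the same route as the paper: the $3$-edge-connected case is settled by Theorem~\ref{JLPT} together with Lemma~\ref{magictrick}, and the remaining bridgeless graphs are handled by a minimality/induction argument that reduces along a $2$-edge-cut and glues the two flows via Observation~\ref{obser:2red} and Lemma~\ref{lemma:n2edgecut}. You merely supply some details the paper leaves implicit, namely the explicit triples $(1,x,-1-x)$ realizing each magnitude on $f$ and the subcase where $f$ itself lies in the $2$-edge-cut.
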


\begin{proof}
Note that by Theorem~\ref{6_Flow}, $(G,\texttt{1})$ admits a nowhere-zero $6$-flow. If $G$ is 3-edge-connected, then the result follows from Theorem~\ref{JLPT} and Lemma~\ref{magictrick}. Suppose now that $G$ is 2-edge-connected. We show that we can choose the flow value on $f$. Suppose the contrary, and let $G$ be a counterexample  with minimum number of edges. Let $X$ be a 2-edge-cut of $G$. Let $(G_1,\tt{1})$ and $(G_2,\tt{1})$ be the resulting graphs of the 2-edge-cut reduction of $(G,\tt{1})$ with respect to $X$. By Observation~\ref{obser:2red}, $(G_1,\tt{1})$ and $(G_2,\tt{1})$ are flow-admissible. One of $(G_1,\tt{1})$ and $(G_2,\tt{1})$, say $(G_1,\tt{1})$, contains $f$. Since $(G_1,\tt{1})$ is smaller than $G$, it admits a nowhere-zero $6$-flow $(D_1,\phi_1)$ such that we can choose $\phi_1(f)$. Since $(G_2,\tt{1})$ is also smaller than $G$, $(G_2,\tt{1})$ admits a nowhere-zero $6$-flow $(D_2,\phi_2)$ such that $\phi_2(e_2)=\phi_1(e_1)$ where $e_i\in E(G_i)-E(G)$ for $i=1,2$. By Lemma~\ref{lemma:n2edgecut} we can combine $(D_1,\phi_1)$ and $(D_2,\phi_2)$ to obtain a desired nowhere-zero $6$-flow on $G$, which is a contradiction. 
\end{proof}

\begin{theorem}\label{thm:bridges}
Let $(G,\sigma)$ be a flow-admissible signed cubic graph with $N_{\sigma}=\{n_1,n_2\}$. If $(G,\sigma)$ contains a bridge, then $(G,\sigma)$ admits a nowhere-zero 6-flow $(D,\phi)$ such that $\phi(n_1)=\phi(n_2)=1$. 
\end{theorem}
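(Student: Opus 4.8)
The plan is to cut the graph at the bridge, encode each negative edge together with the bridge as a small gadget on an all-positive cubic graph, and then prescribe flow values at a degree-$3$ vertex using Theorem~\ref{JLPT} and Lemma~\ref{magictrick}. First I fix the structure of the bridge. Let $b=xy$ be a bridge and let $G_1'\ni x$, $G_2'\ni y$ be the two components of $G-b$. By Lemma~\ref{flow-admissible} no component of $G-b$ may be balanced; an all-positive component would be balanced, so each $G_i'$ contains a negative edge, hence exactly one, say $n_1\in E(G_1')$ and $n_2\in E(G_2')$. Moreover $n_i$ is not a bridge of $G_i'$ (otherwise one side of it would be an all-positive, hence balanced, component of $G$ minus a bridge, contradicting Lemma~\ref{flow-admissible}), so $n_i$ lies on a circuit of $G_i'$. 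Summing Kirchhoff's law over all vertices of $G_1'$ shows that any flow satisfies $\phi(b)=2\phi(n_1)$, since the two half-edges of a negative edge point the same way and contribute $\pm 2\phi(n_1)$; symmetrically $\phi(b)=2\phi(n_2)$. Thus the targets $\phi(n_1)=\phi(n_2)=1$ are consistent and force $\phi(b)=2$.

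Next I build the gadget. Write $n_i=u_iu_i'$ and let $x_i$ be the end of $b$ lying in $G_i'$. Let $H_i$ be the all-positive cubic graph obtained from $G_i'$ by deleting $n_i$ and adding a new vertex $z_i$ joined to $u_i$, $u_i'$ and $x_i$. A nowhere-zero $6$-flow on $H_i$ with $\phi(z_iu_i)=\phi(z_iu_i')=1$ and $\phi(z_ix_i)=2$ is exactly the data of a flow on $G_i'$ in which $n_i$ is introverted of value $1$ and the half-bridge at $x_i$ carries $2$: the edges $z_iu_i,z_iu_i'$ reproduce the two half-edges of $n_i$, and $z_ix_i$ reproduces the half-edge of $b$. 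I claim $H_i$ is bridgeless. The three edges at $z_i$ are not bridges because $G_i'-n_i$ is connected ($n_i$ lies on a circuit). A bridge $f$ inside $G_i'-n_i$ would force $u_i,u_i',x_i$ all to one side, making the opposite side an all-positive component of $G-f$, which again contradicts Lemma~\ref{flow-admissible}.

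The heart of the argument is to realize the prescribed values at $z_i$, which I do by induction on $|E(H_i)|$. If $H_i$ is $3$-edge-connected, I apply Theorem~\ref{JLPT} at $z_i$ with $a=b=1$ and $c=4=-2$ (so $a+b+c=0$ in $\mathbb{Z}_6$), orienting all three edges out of $z_i$, and then pass from the $\mathbb{Z}_6$-flow to an integer $6$-flow via Lemma~\ref{magictrick} with $x=1$, obtaining $\phi(z_iu_i)=\phi(z_iu_i')=1$ and $\phi(z_ix_i)=2$. Otherwise $H_i$ has a $2$-edge-cut; crucially, no $2$-edge-cut contains two edges incident to $z_i$, since deleting any two of them leaves $z_i$ attached to the connected graph $G_i'-n_i$ through the third. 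Hence a $2$-edge-cut reduction (Observation~\ref{obser:2red}) keeps the three edges of $z_i$ together in a smaller all-positive bridgeless cubic graph $H_i^{(1)}\ni z_i$ retaining this property, while the other resulting graph $H_i^{(2)}$ avoids $z_i$. By induction $H_i^{(1)}$ carries a flow with the prescribed values at $z_i$, by Corollary~\ref{6_Flow_1} the value on the new cut-edge of $H_i^{(2)}$ may be chosen to match, and Lemma~\ref{lemma:n2edgecut} combines them. I expect this step to be the main obstacle: one must check that the reduction never passes through $z_i$ and that the single free value supplied by Corollary~\ref{6_Flow_1} on the $z_i$-free side suffices, so that only the three values at $z_i$ (never a fourth) are controlled at once.

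Finally I glue across the bridge. Contracting $z_i$ translates each flow back to $G_i'$, setting $n_i$ introverted of value $1$ and letting the half-bridge at $x_i$ carry $2$. Reversing the entire flow on the $G_2'$-side if necessary — which preserves $|\phi(n_2)|=1$ — makes the two orientations of $b$ agree with $\phi(b)=2$, so the two pieces assemble into a nowhere-zero $6$-flow $(D,\phi)$ on $(G,\sigma)$ with $\phi(n_1)=\phi(n_2)=1$; all other edge values lie in $\{1,\dots,5\}$, so this is indeed a $6$-flow. The degenerate cases in which $x_i$ coincides with an end of $n_i$ (producing a digon at $z_i$) are handled by the same bookkeeping.
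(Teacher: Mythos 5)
Your proof is correct and is built on the same core mechanism as the paper's: attach a new degree-$3$ vertex to the two ends of each negative edge and the adjacent bridge-end, prescribe the values $1,1,2$ there via Theorem~\ref{JLPT}, lift the $\mathbb{Z}_6$-flow to an integer $6$-flow with Lemma~\ref{magictrick}, and glue across the bridge with value $2$. The organization differs in one genuine way. The paper first eliminates every non-separating $2$-edge-cut of $(G,\sigma)$ by the $2$-edge-cut reduction on $G$ itself and then decomposes along the \emph{whole chain} of bridges $b_1,\dots,b_l$, which makes the two end-gadgets $3$-edge-connected outright (so Theorem~\ref{JLPT} applies immediately) and requires an explicit treatment of the all-positive middle components $G_1,\dots,G_{l-1}$. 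You instead cut at a single bridge, absorb all remaining bridges and $2$-edge-cuts of $G$ into the two gadgets $H_1,H_2$, prove only that each $H_i$ is bridgeless, and then realize the prescribed boundary values by an internal induction on $H_i$ using the $2$-edge-cut reduction, Observation~\ref{obser:2red}, Corollary~\ref{6_Flow_1} and Lemma~\ref{lemma:n2edgecut}. This buys you a cleaner global structure (no bridge chain, no middle components), at the cost of having to verify the auxiliary claim that three values can be prescribed at a degree-$3$ vertex of a merely $2$-edge-connected cubic graph. That claim does go through, but note that your phrasing ``the reduction keeps the three edges of $z_i$ together'' is slightly loose: a $2$-edge-cut may contain one edge incident to $z_i$, in which case that edge is \emph{replaced} by a new edge at $z_i$ in $H_i^{(1)}$; the argument still works because Lemma~\ref{lemma:n2edgecut} transfers the prescribed value on the new edge back onto the original cut edges, and Corollary~\ref{6_Flow_1} supplies the single matching value on the $z_i$-free side. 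You correctly flag this as the delicate step, and it is the only place where your write-up needs to be tightened.
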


\begin{proof}
Suppose the contrary and let $(G,\sigma)$ be a minimal counterexample in terms of number of edges. 
Let, first, $(G,\sigma)$ contain a 2-edge-cut $X$ that does not separate the negative edges $n_1$ and $n_2$. Suppose that $(G_1,\sigma_{G_1})$ and $(G_2,\sigma_{G_2})$ are resulting graphs of the 2-edge-cut reduction of $(G,\sigma)$ with respect to $X$. By Observation~\ref{obser:2red}, $(G_i,\sigma_{G_i})$ is flow-admissible, for $i=1,2$. Since $(G_1,\sigma_{G_1})$ contains two negative edges and is smaller than $(G,\sigma)$, it admits a nowhere-zero $6$-flow $(D_1,\phi_1)$ such that $\phi_1(n_1)=\phi_1(n_2)=1$. By Corollary~\ref{6_Flow_1}, $(G_2,\sigma_{G_2})$  admits a nowhere-zero 6-flow $(D_2,\phi_2)$ such that $\phi_2(f_2)=\phi_1(f_1)$, where $f_i\in E(G_i)-E(G)$. Finally, by Lemma~\ref{lemma:n2edgecut}, $(G,\sigma)$ admits a nowhere-zero $6$-flow such that the negative edges of $(G,\sigma)$ receive flow value 1. This is a contradiction, and we may assume that every 2-edge-cut of $(G,\sigma)$ separates the negative edges $n_1$ and $n_2$. 

Let $b_1,\ldots,b_l$ be all the bridges of $(G,\sigma)$, for $l\geq1$. Note that neither $n_1$ nor $n_2$ is a bridge, otherwise $(G,\sigma)$ is not flow-admissible. Moreover, since $(G,\sigma)$ is flow-admissible, $b_1,\ldots,b_l$ lie on the same path. Let $(G_0,\sigma_0), \ldots, (G_l,\sigma_l)$ be 2-edge-connected components of $(G,\sigma)-\{b_1,\ldots,b_l\}$ such that $b_i$ is incident to $(G_{i-1},\sigma_{i-1})$ and $(G_i,\sigma_i)$, for $i=1,\ldots, l$. Then $n_1\in E(G_0)$ and $n_2\in E(G_l)$ (or vice versa), otherwise the bridges of $(G,\sigma)$ do not belong to a signed circuit (which is a contradiction with flow-admissibility of $(G,\sigma)$). 

Let $n_1=u_0v_0$ and $n_2=u_lv_l$. For $i\in\{0,l\}$, let $G_i^*$ be an underlying graph obtained from a signed graph $(G_i,\sigma_i)$ by removing $u_iv_i$ and connecting three degree 2 vertices ($u_i$, $v_i$ and an end-vertex of a bridge) into a new vertex $w_i$. We claim that $G_i^*$ is 3-edge-connected. It is easy to see that $G_i^*$ is connected and does not have a bridge, because it is obtained from a 2-edge-connected graph $(G_i,\sigma_i)$ where deleted edge $u_iv_i$ is replaced by a path $u_iw_iv_i$. Suppose for the contrary that $X\subseteq E(G_i^*)$ is a 2-edge-cut of $G_i^*$. If the three neighbors of $w_i$ belong to one component of $G_i^*-X$, then $X$ is a non-separating 2-edge-cut of $(G,\sigma)$, a contradiction. Therefore, there is one component of $G_i^*-X$ containing exactly one neighbor of $w_i$. But then $X$ contains either two edges or exactly one edge incident to $w_i$. In the former case, $G_i^*-w_i=G_i-u_iv_i$ is disconnected, which is impossible, since $(G_i,\sigma_i)$ is 2-edge-connected. In the latter case, $G_i^*-w_i=G_i-u_iv_i$ contains a bridge. This is possible if and only if $u_iv_i$ belongs to a 2-edge-cut of $(G_i,\sigma_i)$, which is a non-separating 2-edge-cut of $(G,\sigma)$, because it contains $u_iv_i$. This is a contradiction, and we conclude that $G_i^*$ is 3-edge-connected. 

By Theorem~\ref{JLPT}, $G_i^*$ admits a nowhere-zero $\mathbb{Z}_6$-flow $(D_i^*,\phi^*_i)$ such that the flow values on edges incident to $w_i$ are $a,b$ and $c$, for $a+b+c=0$. For $G_0^*$, let $a=b=1$ and $c=-2$, where $\phi^*_0(w_0u_0)=\phi^*_0(w_0v_0)=1$. For $G_l^*$, let $a=b=-1$ and $c=2$, where $\phi^*_l(w_lu_l)=\phi^*_l(w_lv_l)=-1$. By Lemma~\ref{magictrick}, $G_i^*$ admits a nowhere-zero $6$-flow $(D_i,\phi_i)$ such that $\phi_i(e)=\phi^*_i(e)$, for every edge $e$ incident to $w_i$. 

Suppose first that $l=1$. We define $(D,\phi)$ on $(G,\sigma)$ as follows. Let $(D,\phi)=(D_i,\phi_i)$ for every edge $e\in E(G)\cap E(G_i)$. Let $n_0$ be extroverted, $n_l$ be introverted and let $\phi(n_0)=\phi(n_l)=1$. Finally, let $b_1$ be oriented from a vertex of $G_0$ to a vertex of $G_l$ and let $\phi(b_1)=2$. It is easy to see that $(D,\phi)$ is a desired nowhere-zero 6-flow on $(G,\sigma)$, a contradiction.

Finally, suppose that $l\geq 2$. Then $(G_j,\sigma_j)$ are all-positive, for $j=1,\ldots,l-1$. Add a new edge $e_j$ to $G_j$ to connect the vertices of degree 2 (there are two such vertices-- the end-vertices of $b_{j}$ and $b_{j+1}$ in $(G,\sigma)$). By Corollary~\ref{6_Flow_1}, $G_j\cup e_j$ admits a nowhere-zero $6$-flow $(D_j,\phi_j)$ such that $\phi_j(e_j)=2$ where $e_j$ is oriented from the end-vertex of $b_{j+1}$ to the end-vertex of $b_j$. We are ready to define $(D,\phi)$ on $(G,\sigma)$. For $i\in\{0,\ldots,l\}$, let $(D|_{E(G_i)},\phi|_{E(G_i)})=(D_i|_{E(G_i)},\phi_i|_{E(G_i)})$, let $n_1$ be extroverted, $n_2$ be introverted and $\phi(n_1)=\phi(n_2)=1$. Finally, for $j=1,\ldots, l$, let $b_j$ be oriented from the vertex of $G_{j-1}$ to the vertex of $G_j$ with $\phi(b_j)=2$. It is easy to see that $(D,\phi)$ is a desired nowhere-zero 6-flow on $(G,\sigma)$, which is a contradiction and end of the proof. 
\end{proof}

In the following we focus on $(G,\sigma)$ where $G$ is 3-edge-colorable or critical. 

\begin{lemma} \label{flow_value_1} Let $G$ be a cubic graph and $e_1,e_2 \in E(G)$. If $G$ is 3-edge-colorable, then $(G,\texttt{1})$ has a nowhere-zero 4-flow $(D,\phi)$ such that $\phi(f)>0$ for every $f\in E(G)$, and $\phi(e_1) = \phi(e_2) = 1$.
\end{lemma}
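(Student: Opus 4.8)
The plan is to derive the desired flow from a proper $3$-edge-coloring by combining two elementary even-subgraph flows, and then to use the remaining orientation freedom to route both $e_1$ and $e_2$ to the value $1$. Fix any proper $3$-edge-coloring $c\colon E(G)\to\{c_1,c_2,c_3\}$ (which exists since $G$ is $3$-edge-colorable) and write $M_i=c^{-1}(c_i)$ for the three perfect matchings. The two spanning subgraphs $H=M_2\cup M_3$ and $H'=M_1\cup M_3$ are $2$-regular, hence disjoint unions of cycles. Orienting each cycle of $H$ cyclically and putting the value $1$ on each of its edges gives an integer flow $\phi_1$ with $\phi_1\equiv\pm1$ on $H$ and $\phi_1\equiv 0$ on $M_1$; similarly, orienting each cycle of $H'$ and putting the value $2$ on each edge gives an integer flow $\phi_2$ with $\phi_2\equiv\pm2$ on $H'$ and $\phi_2\equiv0$ on $M_2$. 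Their sum $\phi=\phi_1+\phi_2$ is an integer flow whose values are $\pm2$ on $M_1$, $\pm1$ on $M_2$, and $\pm1\pm2\in\{\pm1,\pm3\}$ on $M_3$; in particular $\phi$ is nowhere-zero and bounded by $3$ in absolute value, so it is a nowhere-zero $4$-flow.

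Next I would choose the labeling of the colour classes so that $e_1$ lands in the ``value-$1$'' matching $M_2$. Concretely, relabel the colours so that $c(e_1)=c_2$. If $c(e_2)=c_2$ as well, then both $e_1$ and $e_2$ lie in $M_2$ and already receive value $\pm1$. If $c(e_2)\neq c_2$, relabel further so that $c(e_2)=c_3$; then $e_1\in M_2$ receives value $\pm1$, while $e_2\in M_3$ receives value $\pm1$ or $\pm3$. In the latter case I would correct the value on $e_2$ using the orientation of the cycle $O$ of $H'$ that contains $e_2$: since $\phi_2(e_2)=\pm2$ changes sign when $O$ is reoriented while $\phi_1(e_2)=\pm1$ is untouched, one of the two orientations of $O$ makes the two contributions cancel to $\pm1$, i.e. $|\phi(e_2)|=1$. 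Crucially, reorienting $O$ only alters $\phi_2$ on the edges of $O\subseteq M_1\cup M_3$ and therefore does not affect $e_1\in M_2$, and it leaves $\phi$ nowhere-zero (edges of $O$ keep absolute values in $\{1,2,3\}$). Finally, reversing individual edges and negating their flow values where necessary --- which preserves the property of being a nowhere-zero $4$-flow, as noted in Section~2, and preserves absolute values --- turns $\phi$ into an all-positive nowhere-zero $4$-flow with $\phi(e_1)=\phi(e_2)=1$.

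The construction itself is routine; the only real point is to guarantee that fixing $e_2$ does not undo the value already achieved on $e_1$. This is exactly why the argument is organised around the asymmetry of the two even subgraphs: the matching $M_2$, which carries the forced value $1$, is disjoint from the subgraph $H'$ whose cycle orientations are used to repair $e_2$. Had both $e_1$ and $e_2$ been forced into the overlap matching $M_3$, the two orientation adjustments could interfere; the colour relabeling above is designed precisely to avoid this by sending $e_1$ into $M_2$ and at most one of the two edges into $M_3$.
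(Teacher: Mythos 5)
Your proof is correct and follows essentially the same route as the paper's: decompose $G$ via the $3$-edge-coloring into two $2$-regular subgraphs, sum a $1$-valued and a $2$-valued cycle flow, arrange the labels so that $e_1$ lies in the matching receiving only the $\pm1$ contribution, and fix $e_2$ in the overlap matching by choosing the orientation of the relevant cycle so the contributions cancel to $\pm1$, finishing by reversing negatively valued edges. Your write-up just makes explicit the orientation adjustment and the non-interference argument that the paper leaves implicit.
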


\begin{proof}
Let $c:\ E(G) \to \{c_1,c_2,c_3\}$ be a $3$-edge-coloring, and let $c(e_1)=c_1$  {and} $c(e_2) \in \{c_1,c_2\}$.
Let $(D_1,\phi_1)$ be a nowhere-zero 2-flow on $c^{-1}(c_1)\cup c^{-1}(c_2)$ and
$(D_2,\phi_2)$ be a nowhere-zero 2-flow on $c^{-1}(c_2) \cup c^{-1}(c_3)$.

In both cases for $c(e_2)$, $(D,\phi)$ is obtained as a combination of $(D_1,\phi)$ and $(D_2,2\phi_2)$. Note that if $c(e_2) = c_2$, then the orientation $D_2$ should be chosen in such a way that
$D_1$ and $D_2$ give opposite directions to $e_2$. The desired flow on $(G,\sigma)$ is obtained from $(D,\phi)$ by reversing each edge with negative value. 
\end{proof}

\begin{theorem}\label{thm:cubic3edgecolourable}
Let $(G,\sigma)$ be a flow-admissible signed cubic graph with $N_{\sigma}=\{n_1,n_2\}$. If $G$ is $3$-edge-colorable or critical, then $(G,\sigma)$ has a nowhere-zero $6$-flow $(D,\phi)$ such that $\phi(n_1)=\phi(n_2)=1$.
\end{theorem}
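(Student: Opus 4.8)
The plan is to reduce the statement to a flow problem on the all-positive graph $G-\{n_1,n_2\}$ and to transport a suitable $4$-flow through the two negative edges. After switching and orienting so that $n_1$ is extroverted and $n_2$ is introverted, asking for a nowhere-zero $6$-flow of $(G,\sigma)$ with $\phi(n_1)=\phi(n_2)=1$ is the same as asking for an integer flow on $(G-\{n_1,n_2\},\texttt{1})$, with all values in $\{1,\dots,5\}$, whose outflow equals $+1$ at the two ends $u_1,u_2$ of $n_1$ and $-1$ at the two ends $v_1,v_2$ of $n_2$ (these four prescribed values sum to $0$, as they must). Before starting I would dispose of small edge-cuts exactly as in the proof of Theorem~\ref{thm:bridges}: using Observation~\ref{obser:2red} and Lemma~\ref{lemma:n2edgecut} I reduce along every $2$-edge-cut that does not separate $n_1$ and $n_2$, and by Corollary~\ref{cor:3edgecut} no $3$-edge-cut contains both negative edges. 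In this way I may assume that $(G,\sigma)$ is as highly connected as these operations allow.

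For the $3$-edge-colorable case I would start from Lemma~\ref{flow_value_1} applied with $e_1=n_1$ and $e_2=n_2$, which yields an all-positive nowhere-zero $4$-flow $(D,\phi)$ with $\phi>0$ everywhere and $\phi(n_1)=\phi(n_2)=1$. I then re-sign: flipping one half-edge of $n_1$ turns it into an extroverted edge and changes the outflow at $u_1$ by $-2$, while flipping one half-edge of $n_2$ turns it into an introverted edge and changes the outflow at $v_2$ by $+2$; the value $1$ on both edges is preserved. It remains to repair Kirchhoff's law, i.e.\ to add $+2$ to the outflow at $u_1$ and $-2$ at $v_2$. I would do this by adding the constant $2$ to $\phi$ along a directed $u_1$-$v_2$-path that avoids $n_1$ and $n_2$; since $\phi\le 3$ on $G$, the modified values lie in $\{1,\dots,5\}$ and stay nonzero, producing the required $6$-flow. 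The existence of some directed $u_1$-$v_2$-path in $G$ is guaranteed by Lemma~\ref{lemma:directeduvpath}, since a positive flow makes $G$ strongly connected in its orientation.

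The hard part will be to guarantee that such an augmenting path can be chosen inside $G-\{n_1,n_2\}$, and not merely in $G$. If every directed $u_1$-$v_2$-path (and, by the symmetric choice of which negative edge is extroverted, every directed $v_1$-$u_2$-path) is forced through $n_1$ or $n_2$, then $\{n_1,n_2\}$ lies in a small directed cut; tracing this back produces an edge-cut of $G$ carrying both $n_1$ and $n_2$. A cut of size three is excluded by Corollary~\ref{cor:3edgecut}, and cuts of size two have already been reduced away, so the remaining danger is a four-edge-cut carrying both negative edges with unit flow on each side. Controlling this configuration --- either by a further reduction in the spirit of Section~3 or by a more careful choice of the initial $4$-flow and of the extroverted/introverted assignment --- is the main obstacle, and it is precisely here that the connectivity accumulated in the first step must be used. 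Note that the distinction between value $1$ and value $2$ is genuine: the reversal technique of Lemma~\ref{prop:1factorimplies4flow} naturally delivers value $k/2$ on the negative edges, which is why the value-$1$ target forces the constant-augmentation argument above rather than a plain path reversal.

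Finally, for the critical case I would exploit the definition directly: $(G-n_1,\texttt{1})$ admits a nowhere-zero $4$-flow, which (as $G$ is a snark) passes through the degree-$2$ ends of $n_1$. Reinserting $n_1$ as an extroverted negative edge of value $1$ then amounts to rerouting a single unit of flow along a directed $u_1$-$u_2$-path, while $n_2$ is made introverted exactly as before; the resulting outflow defect is corrected by the same augmentation technique, now applied on $G-\{n_1,n_2\}$ with the flow inherited from $G-n_1$. The bookkeeping is heavier, because the inherited flow need not take value $1$ on $n_2$, so one or two augmenting paths are needed to reach the target boundary with values $(+1,+1,-1,-1)$; but the method, and the obstacle, are the same as in the colorable case. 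Throughout, Lemma~\ref{magictrick} together with Theorem~\ref{JLPT} remain available as an alternative route, should one prefer to build a $\mathbb{Z}_6$-flow with value $1$ on the negative edges first and then lift it to an integer $6$-flow.
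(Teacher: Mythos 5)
Your overall strategy is the same as the paper's: reduce $2$-edge-cuts as in Theorem~\ref{thm:bridges}, invoke Corollary~\ref{cor:3edgecut} against $3$-edge-cuts, start from the all-positive $4$-flow of Lemma~\ref{flow_value_1} with value $1$ on both negative edges, flip one half-edge of each of $n_1$ and $n_2$, and repair Kirchhoff's law by adding $2$ along a directed path between the two defective vertices that avoids $N_\sigma$. But you stop exactly at the point where the proof actually has content: you correctly deduce that the only obstruction to finding such a path is that $n_1$ and $n_2$ lie in a common $4$-edge-cut with flow value $1$ on each edge, and then you declare handling this configuration to be ``the main obstacle'' without resolving it. That is a genuine gap, not a detail. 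The paper's resolution is concrete and short: if $W$ is the set of vertices reachable from $x_1$ by directed paths avoiding $N_\sigma$, Kirchhoff's law across the cut forces the two remaining cut edges to carry flow value $1$ each; pick one of them, $f=z_1z_2$ directed into $W$, and route the augmenting path as $P_2\cup f\cup P_3$, traversing $f$ \emph{against} its orientation, so that adding $2$ along the path sends $\phi(f)$ from $1$ to $-1$. This stays nonzero precisely because the theorem does not demand an all-positive flow on $(G,\sigma)$ --- a degree of freedom your write-up never exploits, and which neither ``a further reduction in the spirit of Section~3'' nor ``a more careful choice of the initial $4$-flow'' obviously supplies.

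Two smaller remarks. In the critical case your worry that the flow inherited from $G'$ ``need not take value $1$ on $n_2$'' is unfounded: Lemma~\ref{flow_value_1} lets you prescribe $\phi'(n_2)=1$ on the suppressed graph $G'$ directly, which is exactly what the paper does; the correction is then two unit augmentations along directed $x_1$-$y_2$- and $x_2$-$y_2$-paths avoiding $n_2$, keeping all values at most $5$. And your fallback via Theorem~\ref{JLPT} and Lemma~\ref{magictrick} is not available here, since those require $3$-edge-connectivity of an auxiliary graph and, more importantly, do not by themselves control the signs and values on \emph{two} prescribed edges lying at different vertices.
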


\begin{proof} 
Let $(G,\sigma)$ be a minimal counterexample to the theorem in terms of number of edges. 
By Lemma~\ref{lemma:p2edgecut} and Theorem~\ref{6_Flow}, $(G,\sigma)$ has no 2-edge-cut containing both negative edges. If $(G,\sigma)$ has a 2-edge-cut containing exactly one negative edge, then apply the 2-edge-cut-reduction. Then a combination of Observation~\ref{obser:2red}, induction, Lemma~\ref{lemma:n2edgecut} and Corollary~\ref{6_Flow_1} yields to a contradiction.
Hence, in the following we assume that no 2-edge-cut of $(G,\sigma)$ contains a negative edge and, by Corollary~\ref{cor:3edgecut}, no 3-edge-cut of $G$ 
contains both negative edges.
\smallskip 
\\
\textit{Case 1: $G$ is 3-edge-colorable.} By Lemma \ref{flow_value_1}, there is a nowhere-zero 4-flow $(D',\phi')$ on $(G,\texttt{1})$ such that $\phi'(n_1) = \phi'(n_2) = 1$, and $\phi'(e)>0$, for every $e\in E(G)$.

Suppose, without loss of generality, that $D'$ orients the edges $n_1 = x_1x_2$ and $n_2 = y_1y_2$ from $x_1$ to $x_2$ and from $y_1$ to $y_2$, respectively. We now define an $x_1$-$y_2$-path $P$ such that $E(P)\cap N_{\sigma}=\emptyset$. If there is a directed $x_1$-$y_2$-path $P_1$ such that $E(P)\cap N_{\sigma}=\emptyset$, we set $P=P_1$. Otherwise, by Lemma~\ref{lemma:directeduvpath}, every directed $x_1$-$y_2$-path contains an edge of $N_{\sigma}$. This is possible if and only if $n_1$ and $n_2$ belong to a same 4-edge-cut of $G$, because $\phi'(n_1) = \phi'(n_2) = 1$ and no 2-edge-cut of $(G,\sigma)$ contains a negative edge and no 3-edge-cut of $(G,\sigma)$ contains both negative edges. Let $f=z_1z_2$ be another edge of the 4-edge-cut, and suppose that $D'$ orients $f$ from $z_1$ to $z_2$ (note that by Kirchhoff's law $\phi'(f)=1$). Then we set $P=P_2\cup f\cup P_3$, where $P_2$ is a directed $x_1$-$z_2$-path such that $E(P_2)\cap N_{\sigma}=\emptyset$ and $P_3$ is a directed $z_1$-$y_2$-path such that $E(P_3)\cap N_{\sigma}=\emptyset$. We are ready to define $(D,\phi)$ on $(G,\sigma)$. Obtain $D$ by reversing the orientation of $h_{x_1}(n_1)$ and $h_{y_2}(n_2)$ and by setting $D(h)=D'(h)$ for every other half-edge $h$ of $(G,\sigma)$. The desired nowhere-zero 6-flow on $(G,\sigma)$ is $(D,\phi)$ with $\phi(e) = \phi'(e) +2$ if $e \in E(P)-f$, $\phi(f)=-1$, and $\phi(e) = \phi'(e)$ otherwise.
\smallskip 
\\
\textit{Case 2: $G$ is critical.}
Suppress $x_1$ and $x_2$ in $G-n_1$ to obtain a 3-edge-colorable cubic graph $G'$.
By Lemma \ref{flow_value_1}, $(G',\texttt{1})$ admits a nowhere-zero 4-flow $(D',\phi')$ such that $\phi'(n_2)=1$. Let $n_2 = y_1y_2$ be directed from $y_1$ to $y_2$.
Clearly, $(D',\phi')$ can be considered also as a nowhere-zero 4-flow on $(G-n_1, \texttt{1})$. Consider a directed $x_1$-$y_2$-path $P_1$ and a  directed $x_2$-$y_2$-path $P_2$
in $(G-n_1, \texttt{1})$. Since $\phi'(n_2) = 1$ and $n_2$ does not belong to any 2-edge-cut, we may assume that $n_2\notin E(P_1)\cup E(P_2)$. Obtain an orientation $D$ of $(G,\sigma)$ by letting $n_1$ be extroverted, reversing 
the orientation of $h_{y_2}(n_2)$, and by setting $D(h)=D'(h)$ for every other half-edge $h$ of $(G,\sigma)$. Let $\phi''(e) = \phi'(e) + 1$ if $e \in E(P_1)$, $\phi''(n_1) = 1$, and $\phi''(e) = \phi'(e)$  if $e \not \in E(P_1) \cup \{n_1\}$. The desired nowhere-zero 6-flow on $(G,\sigma)$ is
$(D,\phi)$ with $\phi(e) = \phi''(e) + 1$ if $e \in E(P_2)$, and $\phi(e) = \phi''(e)$ otherwise.
\end{proof}

\section{General case}

In this section we prove a general statement. 

\begin{theorem}\label{thm:general}
Let $(G,\sigma)$ be a flow-admissible signed cubic graph with $N_{\sigma}=\{uv, xy\}$, and let $G^*=(V(G),E(G)\cup\{ux\}-\{uv,xy\})$ be an unsigned graph. If $G^*$ admits a nowhere-zero $k$-flow for some integer $k$ such that $ux$ receives flow value 1, then $(G,\sigma)$ admits a nowhere-zero $(k+1)$-flow $(D,\phi)$ with the following properties: 
\begin{enumerate} 
\item $\phi(e)>0$, for every $e\in E(G)$, 
\item $\phi(uv)=\phi(xy)=1$, and
 \item there exists a $v$-$y$-path $P$ such that $\phi^{-1}(k)\subseteq E(P)$ and $\phi^{-1}(1)\cap E(P)=\emptyset$.
\end{enumerate}
\end{theorem}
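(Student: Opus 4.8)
The plan is to start from the given nowhere-zero $k$-flow on $G^*$, transplant it onto $(G,\sigma)$ by re-inserting the two negative edges, and then repair the single unit of imbalance this creates by augmenting along one directed path.

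First I would normalize the flow on $G^*$. Since $G^*$ is all-positive, I may reverse every edge carrying a negative value to obtain a nowhere-zero $k$-flow $(D^*,\phi^*)$ with $\phi^*(e)>0$ for all $e$; because $\phi^*(ux)=1>0$ the edge $ux$ is untouched, so $\phi^*(ux)=1$ and I orient $ux$ from $u$ to $x$. Now I define a function on $(G,\sigma)$ by copying $(D^*,\phi^*)$ on $E(G)\cap E(G^*)=E(G)-\{uv,xy\}$, orienting $uv$ as extroverted and $xy$ as introverted, and setting $\phi(uv)=\phi(xy)=1$. A direct check at the four special vertices shows $(D,\phi)$ is a nowhere-zero, everywhere-positive function whose values lie in $\{1,\dots,k-1\}$ off the negative edges: deleting $ux$ from $\phi^*$ leaves a deficit of $1$ in the outflow at $u$ and an excess of $1$ at $x$, and the two re-inserted negative edges are chosen precisely so that they cancel these at $u$ and at $x$ while producing outflow $+1$ at $v$ and $-1$ at $y$. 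Thus $(D,\phi)$ satisfies Kirchhoff's law everywhere except at $v$ and $y$.

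The heart of the argument, and the step I expect to be the main obstacle, is to produce a directed $y$-$v$-path $P$ (in the sense of the paper, i.e. avoiding the negative edges and consistently oriented) along which I can push the missing unit. I would mimic the proof of Lemma~\ref{lemma:directeduvpath}: let $W$ be the set of vertices reachable from $y$ by directed paths that avoid $uv$ and $xy$, and suppose for contradiction that $v\notin W$. Summing the outflow over $W$ gives $-1$, since only $y$ contributes; on the other hand, by maximality of $W$ every positive edge of the cut $\delta(W)$ is directed into $W$ and so contributes a nonpositive amount. A short case analysis on whether $u,x\in W$ then forces $\delta(W)$ to be either a single negative edge ($uv$ or $xy$ as a bridge) or a $3$-edge-cut containing both $uv$ and $xy$, the remaining case being numerically impossible. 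A negative edge that is a bridge lies in no signed circuit, and a $3$-edge-cut through both negative edges is excluded by Corollary~\ref{cor:3edgecut}, so flow-admissibility (Lemma~\ref{flow-admissible}) rules out every case. Hence $v\in W$ and the path $P$ exists.

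Finally I would augment: set $\phi'(e)=\phi(e)+1$ for $e\in E(P)$ and $\phi'(e)=\phi(e)$ otherwise, keeping the orientation $D$. Increasing the values along the directed $y$-$v$-path adds $1$ to the outflow at $y$ and $1$ to the inflow at $v$, balancing both while leaving every interior vertex balanced, so $(D,\phi')$ is a genuine nowhere-zero flow on $(G,\sigma)$. The three conclusions are then immediate: all values stay positive; the negative edges lie off $P$, so $\phi'(uv)=\phi'(xy)=1$; every off-path value is at most $k-1$ and every on-path value is at most $k$, giving a nowhere-zero $(k+1)$-flow with $\phi'^{-1}(k)\subseteq E(P)$; and since each edge of $P$ already had value at least $1$, none of them has value $1$ after the increase, that is $\phi'^{-1}(1)\cap E(P)=\emptyset$.
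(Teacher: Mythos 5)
Your proof is correct and follows essentially the same route as the paper: normalize the $k$-flow on $G^*$ to be everywhere positive, transplant it to $(G,\sigma)$ with both negative edges carrying flow value $1$, and repair the resulting imbalance at $v$ and $y$ by pushing one extra unit along a directed $y$-$v$-path avoiding the negative edges. The only difference is that you justify the existence of that path by an explicit cut argument (invoking Corollary~\ref{cor:3edgecut} and flow-admissibility), where the paper merely cites Lemma~\ref{lemma:directeduvpath} together with $\phi^*(ux)=1$; your version spells out the detail the paper leaves implicit.
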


\begin{proof}
Let $(D^*, \phi^*)$ be a nowhere-zero $k$-flow of $G^*$ as described in the statement, furthermore we may assume that $\phi^*(e)>0$ for every $e\in E(G^*)$. Suppose that $ux$ is oriented from $u$ to $x$, and let $P$ be a directed $y$-$v$-path of $G^*-\{ux\}$ (which exists by Lemma~\ref{lemma:directeduvpath} and the fact that $\phi^*(ux)=1$). We define $(D,\phi)$ on $(G,\sigma)$ as follows. For $e\in E(G)\cap E(G^*)$ we set $D(e)=D^*(e)$. Let $xy$ be extroverted and $uv$ be introverted, and let $\phi(xy)=\phi(uv)=1$. If $e\notin P$, then $\phi(e)=\phi^*(e)$, and if $e\in P$, then $\phi(e)=\phi^*(e)+1$. It is easy to see that $(D,\phi)$  is a desired nowhere-zero $(k+1)$-flow. 
\end{proof}

The previous theorem combined with the following observation provides several interesting corollaries. 

\begin{observation}\label{obser:G*}
Let $(G,\sigma)$ be a flow-admissible signed cubic graph with $N_{\sigma}=\{uv,xy\}$, and let $G^*=(V(G),E(G)\cup\{ux\}-\{uv,xy\})$ be an unsigned graph. If no 2-edge-cut of $(G,\sigma)$ contains a negative edge, then $G^*$ is flow-admissible.
\end{observation}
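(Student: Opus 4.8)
The plan is to reduce the statement to bridgelessness and then translate a hypothetical bridge of $G^*$ into a small edge-cut of $G$. Since $G^*$ is all-positive, by the remark of Section~2 that a graph is flow-admissible if and only if it has no bridge, it suffices to prove that $G^*$ has no bridge. I would argue by contradiction: assume $b$ is a bridge of $G^*$, and let $(A,B)$ be the vertex partition of $V(G^*)=V(G)$ determined by the two sides of $G^*-b$, so that $b$ is the unique edge of $G^*$ joining $A$ and $B$. (We may assume $G$ is connected, since otherwise flow-admissibility forces both negative edges into one component and leaves all other components all-positive and bridgeless, so that $b$ necessarily lies in the component carrying the negative edges.)

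The heart of the argument is to rewrite this cut inside $G$. As $E(G)=(E(G^*)\setminus\{ux\})\cup\{uv,xy\}$, every edge of $G$ other than $uv$ and $xy$ separates $A$ from $B$ exactly when it does so in $G^*$; hence the edges of $G$ crossing $(A,B)$ are those of $\{b\}\setminus\{ux\}$ together with whichever of the negative edges $uv,xy$ happen to cross. Consequently the edge-cut $\delta_G(A)$ has size at most $3$, it is nonempty because $G$ is connected, and---crucially---$u$ and $x$ lie on opposite sides of the partition if and only if $b=ux$.

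I would then finish by a case analysis on the number of negative edges contained in $\delta_G(A)$. If $\delta_G(A)$ contains both negative edges, it is either $\{uv,xy\}$ (when $b=ux$), a $2$-edge-cut, or $\{b,uv,xy\}$ (when $b\neq ux$), a $3$-edge-cut; the former contradicts the hypothesis that no $2$-edge-cut carries a negative edge, and the latter contradicts Corollary~\ref{cor:3edgecut}. If $\delta_G(A)$ contains exactly one negative edge, it is either a single negative bridge $\{uv\}$ or $\{xy\}$---impossible, since such a bridge would leave an all-positive, hence balanced, component and violate flow-admissibility by Lemma~\ref{flow-admissible}---or else a set $\{b,uv\}$ (resp.\ $\{b,xy\}$), again a $2$-edge-cut through a negative edge, contradicting the hypothesis. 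Finally, if $\delta_G(A)$ contains no negative edge, then since the cut is nonempty we must have $b\neq ux$ and $\delta_G(A)=\{b\}$, so $b$ is a bridge of $G$ whose $B$-side carries no negative edge; that component of $G-b$ is balanced, and $(G,\sigma)$ is not flow-admissible by Lemma~\ref{flow-admissible}, a contradiction. Every case yields a contradiction, so $G^*$ is bridgeless and hence flow-admissible.

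The only genuinely delicate point is the bookkeeping of cut sizes and the precise meaning of ``$k$-edge-cut'': I must be sure that each translated cut $\delta_G(A)$ really qualifies as a $2$- or $3$-edge-cut in the sense used by Corollary~\ref{cor:3edgecut} and by the hypothesis. This is where I would lean on the switching argument behind Lemma~\ref{lemma:noflow}: switching at all vertices of $A$ flips exactly the edges of $\delta_G(A)$, so a cut of size $2$ or $3$ carrying both negative edges can be switched to a signature with a single negative edge, which is non-flow-admissible irrespective of whether the cut happens to be an inclusion-minimal bond. Making this minimality issue disappear is, to my mind, the main thing to get right; the rest of the argument is routine case bookkeeping.
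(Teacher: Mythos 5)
Your proof is correct and follows essentially the same route as the paper: assume $G^*$ has a bridge $b$, translate the corresponding partition into the edge-cut $\delta_G(A)$ of $(G,\sigma)$, and rule out every possibility using the hypothesis on $2$-edge-cuts, Corollary~\ref{cor:3edgecut}, and the balanced-component criterion of Lemma~\ref{flow-admissible}. The only difference is organizational (you case-split on the number of negative edges crossing, the paper on whether $b=ux$ and the location of $v,y$), and your explicit handling of the single-negative-bridge subcase and of the non-minimality of the cuts is if anything slightly more careful than the paper's.
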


\begin{proof}
Suppose for a contradiction that $G^*$ is not flow-admissible. Then $G^*$ contains a bridge $b$. If $b=ux$, then either $(G,\sigma)$ has two components, each containing a negative edge, or $\{uv,xy\}$ is a 2-edge-cut of $(G,\sigma)$. In the first case $(G,\sigma)$ is not flow-admissible, and in the second case there is a 2-edge-cut of $(G,\sigma)$ containing a negative edge, a contradiction. If $b\neq ux$, then $u$ and $x$ belong to the same component $H$ of $G^*-b$. If $v$ and $y$ both belong to $H$, then $b$ is a bridge of $(G,\sigma)$ with an all-positive signed graph on one side. By Lemma~\ref{flow-admissible}, $(G,\sigma)$ is not flow-admissible, a contradiction. If neither $v$ nor $y$ belongs to $H$, then $\{uv, xy, b\}$ is a 3-edge-cut containing two negative edges. Hence by Corollary~\ref{cor:3edgecut}, $(G,\sigma)$ is not flow-admissible. Suppose, without loss of generality, that one of $v$ and $y$, say $v$, belongs to $H$. Then $\{uv, b\}$ is a 2-edge-cut of $(G,\sigma)$ containing a negative edge, a contradiction.
\end{proof}

We are ready to state the corollaries.

\begin{theorem}
If $(G,\sigma)$ is a flow-admissible signed cubic graph with $N_{\sigma}=\{uv,xy\}$, then $(G,\sigma)$ has a nowhere-zero 7-flow $(D,\phi)$ such that $\phi(uv)=\phi(xy)=1$, and all the edges with flow value 6 lie on a single path. 
\end{theorem}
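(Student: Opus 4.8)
The plan is to reduce the signed graph $(G,\sigma)$ to the unsigned auxiliary graph $G^* = (V(G), E(G)\cup\{ux\}-\{uv,xy\})$ and invoke Theorem~\ref{thm:general}. The whole strategy hinges on producing, for the cubic graph $G^*$, a nowhere-zero $6$-flow in which the special edge $ux$ carries flow value $1$; once that is in hand, Theorem~\ref{thm:general} with $k=6$ immediately delivers a nowhere-zero $7$-flow on $(G,\sigma)$ with $\phi(uv)=\phi(xy)=1$, all flow values positive, and all edges of value $6$ lying on a single $v$-$y$-path (since property~3 of Theorem~\ref{thm:general} says $\phi^{-1}(6)\subseteq E(P)$ for a single path $P$). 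So the conclusion about edges of flow value $6$ lying on one path is inherited for free from the structure of the theorem I am applying.

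First I would dispose of the degenerate cuts. By Lemma~\ref{lemma:p2edgecut} together with Theorem~\ref{6_Flow}, if $(G,\sigma)$ has a $2$-edge-cut containing both negative edges it already has flow number at most $6$, and one checks the path condition separately; more cleanly, I would argue that we may assume no $2$-edge-cut of $(G,\sigma)$ contains a negative edge, handling the cases where one does by the $2$-edge-cut reduction (Observation~\ref{obser:2red}, Lemma~\ref{lemma:n2edgecut}) and induction, exactly as in the proof of Theorem~\ref{thm:cubic3edgecolourable}. Under the assumption that no $2$-edge-cut contains a negative edge, Observation~\ref{obser:G*} guarantees that $G^*$ is flow-admissible.

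The key step is then to show that the flow-admissible cubic graph $G^*$ admits a nowhere-zero $6$-flow assigning value $1$ to the designated edge $ux$. If $G^*$ is bridgeless this is precisely Corollary~\ref{6_Flow_1}: a bridgeless cubic graph has a nowhere-zero $6$-flow in which we may prescribe the flow value on any chosen edge, so we prescribe $\phi(ux)=1$. The remaining worry is whether $G^*$ can have a bridge even though it is flow-admissible; but a flow-admissible all-positive (unsigned) graph is bridgeless, so $G^*$ is bridgeless and Corollary~\ref{6_Flow_1} applies directly with $f=ux$.

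The main obstacle I anticipate is bookkeeping rather than conceptual: I must make sure the reduction of $2$-edge-cuts meeting a negative edge genuinely preserves the required conclusion (flow values $1$ on the negative edges and the single-path property for value-$6$ edges) through the inductive step, since the path $P$ furnished on a reduced graph has to be recombined across the $2$-edge-cut into a single path on $(G,\sigma)$. I would therefore carry the full strengthened statement (properties 1--3) through the induction so that Lemma~\ref{lemma:n2edgecut} can splice the two partial flows while keeping the value-$6$ edges confined to one path. With that in place, the base case supplied by Corollary~\ref{6_Flow_1} and Theorem~\ref{thm:general} closes the argument.
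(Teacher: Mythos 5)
Your proposal follows the paper's proof in all essentials: dispose of $2$-edge-cuts meeting a negative edge by the $2$-edge-cut reduction and induction, then pass to $G^*$, get a nowhere-zero $6$-flow on $G^*$ with value $1$ on $ux$ via Observation~\ref{obser:G*} and Corollary~\ref{6_Flow_1}, and finish with Theorem~\ref{thm:general} ($k=6$). Your treatment of bridges is in fact slightly leaner than the paper's: the paper first invokes Theorem~\ref{thm:bridges} to assume $2$-edge-connectivity, whereas you let Observation~\ref{obser:G*} absorb the bridge case, which works.

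The one place where your ``bookkeeping'' is a genuine missing step rather than routine: in the inductive $2$-edge-cut case, the $7$-flow on $(G_1,\sigma_1)$ may assign flow value $6$ to the new edge $f_1$, and Lemma~\ref{lemma:n2edgecut} requires $\phi_2(f_2)=\phi_1(f_1)$. Corollary~\ref{6_Flow_1} only produces nowhere-zero $6$-flows, so it cannot deliver value $6$ on $f_2$, and your stated plan stalls exactly there. The paper's fix is to take a $6$-flow on the all-positive side with $\phi_2(f_2)=5$ and all values positive, and then push one extra unit around a directed circuit through $f_2$ (which exists by Lemma~\ref{lemma:directeduvpath}), yielding a $7$-flow with value $6$ on $f_2$; the newly created value-$6$ edges then lie on that circuit minus $f_2$, which is a path that splices with the path coming from $(G_1,\sigma_1)$ across the cut. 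With that amendment your argument coincides with the paper's.
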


\begin{proof}
Let $(G,\sigma)$ be a minimal counterexample to the theorem in terms of number of edges. By Theorem~\ref{thm:bridges}, $(G,\sigma)$ is 2-edge-connected. By Lemma~\ref{lemma:p2edgecut} and Theorem~\ref{6_Flow}, $N_{\sigma}$ does not form any 2-edge-cut. Assume that there is a 2-edge-cut $X$ containing one positive and one negative edge. Let $(G_1,\sigma_1)$ and all-positive $(G_2,\sigma_2)$ be resulting graphs of the 2-edge-cut reduction of $(G,\sigma)$ with respect to $X$. By Observation~\ref{obser:2red} $(G_1,\sigma_1)$ and $(G_2,\sigma_2)$ are flow-admissible. Furthermore, $(G_1,\sigma_1)$ has two negative edges and is smaller than $(G,\sigma)$. Therefore, $(G_1,\sigma_1)$ admits a nowhere-zero $7$-flow $(D_1,\phi_1)$ with the required properties. We may assume that $\phi_1(e)>0$, for every $e\in E(G_1)$. Let $f_i\in E(G_i)-E(G)$, for $i=1,2$. If $\phi_1(f_1)\leq 5$, then we use Corollary~\ref{6_Flow_1} to find a nowhere-zero $6$-flow $(D_2,\phi_2)$ on $(G_2,\sigma_2)$ with $\phi_2(f_2)=\phi_1(f_1)$. Otherwise, we find a nowhere-zero $6$-flow $(D_2,\phi_2)$ on $(G_2,\sigma_2)$ such that  $\phi_2(f_2)=5$ and $\phi_2(e)>0$, for every $e\in E(G_2)$. We modify $(D_2,\phi_2)$ into a nowhere-zero 7-flow by sending a flow value 1 along a directed circuit containing $f_2$ (note that by Lemma~\ref{lemma:directeduvpath}, there is a directed path between end-vertices of $f_2$).
In both cases, by Lemma~\ref{lemma:n2edgecut}, we can combine $(D_1,\phi_1)$ and $(D_2,\phi_2)$ into a desired nowhere-zero $7$-flow on $(G,\sigma)$, a contradiction. 

Finally, we may assume that every 2-edge-cut of $(G,\sigma)$ contains only positive edges. Let $G^*=(V(G),E(G)\cup\{ux\}-\{uv,xy\})$ be an unsigned graph obtained from $(G,\sigma)$. By Observation~\ref{obser:G*}, $G^*$ is flow-admissible, and by Corollary~\ref{6_Flow_1}, $G^*$ admits a nowhere-zero $6$-flow with flow value 1 on $ux$.  We obtain a contradiction by applying Theorem~\ref{thm:general}.
\end{proof}

The proof of the following corollary is very similar to the previous one, hence we omit it.

\begin{corollary}
If Tutte's 5-flow conjecture holds true, then Bouchet's conjecture holds true for all signed graphs with two negative edges. Moreover, for any bridgeless signed graph $(G,\sigma)$ with $N_{\sigma}=\{n_1,n_2\}$, there is a nowhere-zero $6$-flow $(D,\phi)$ with $\phi(e)>0$ for every $e\in E(G)$ such that $\phi(n_1)=\phi(n_2)=1$, and there is a path $P$ such that $\phi^{-1}(5)\subseteq E(P)$ and $\phi^{-1}(1)\cap E(P)=\emptyset$. 
\end{corollary}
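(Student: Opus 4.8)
The plan is to mirror the proof of the preceding 7-flow theorem, replacing every appeal to Seymour's 6-flow theorem (Theorem~\ref{6_Flow}) and its consequence Corollary~\ref{6_Flow_1} by the conjectured 5-flow bound. Concretely, I would first reduce to the cubic case exactly as in the rest of the paper: for the first assertion (Bouchet's conjecture) I invoke the reduction ${\cal G}(G,\sigma)$ from the introduction to pass to a cubic signed graph with two negative edges, dispose of the case with a bridge by Theorem~\ref{thm:bridges} (which already yields a nowhere-zero 6-flow with $\phi(n_1)=\phi(n_2)=1$ unconditionally), and in the bridgeless case appeal to the second (``moreover'') assertion. Thus the whole content lies in proving the moreover statement for \emph{bridgeless cubic} $(G,\sigma)$, which I would do by taking a minimal counterexample and eliminating small edge-cuts, precisely as in the previous theorem.

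The one genuinely new ingredient is a 5-flow analogue of Corollary~\ref{6_Flow_1}, which I would isolate as a lemma: \emph{assuming Tutte's 5-flow conjecture, every bridgeless graph $H$ together with a prescribed edge $f$ admits an all-positive nowhere-zero $5$-flow $(D,\phi)$ with $\phi(f)=1$.} First, by Tutte's conjecture and Tutte's theorem equating integer and $\mathbb{Z}_k$-flows, $H$ has a nowhere-zero $\mathbb{Z}_5$-flow $\psi$. Since $\mathbb{Z}_5$ is a field, scaling by $\psi(f)^{-1}$ produces a nowhere-zero $\mathbb{Z}_5$-flow of value $1$ on $f$; this step replaces the role of Theorem~\ref{JLPT} and is in fact cleaner, because we only need to fix a single edge-value rather than a prescribed triple at a degree-$3$ vertex. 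Second, I would lift this $\mathbb{Z}_5$-flow to an honest integer $5$-flow by the path-pushing argument of Lemma~\ref{magictrick}: represent the values in $\{1,2,3,4\}$ to obtain an all-positive integer function whose outflows are multiples of $5$, and repeatedly send $5-\phi$ along a directed path joining a vertex of positive outflow to one of negative outflow and avoiding $f$. Such a path exists because $f$ is not a bridge of $H$, the potential $\sum_v|\mathrm{outflow}(v)|$ strictly decreases, and $\phi(f)=1$ is preserved throughout.

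With this lemma in hand, the minimal-counterexample argument proceeds as for the preceding theorem. By Corollary~\ref{cor:3edgecut} the two negative edges lie in no $3$-edge-cut; the case in which $N_\sigma$ itself is a $2$-edge-cut is disposed of by switching it to an all-positive graph (Lemma~\ref{lemma:p2edgecut}) and applying the new lemma to obtain a $5$-flow of value $1$ on one cut edge, hence (by Kirchhoff's law across the cut) on both, so that $\phi^{-1}(5)=\emptyset$ and the path condition holds vacuously. For a $2$-edge-cut meeting exactly one negative edge I would use the $2$-edge-cut reduction (Observation~\ref{obser:2red}): by minimality the part $(G_1,\sigma_{G_1})$ carrying both negative edges admits a flow with all the required properties, and I glue it (Lemma~\ref{lemma:n2edgecut}) to a flow on the all-positive part $(G_2,\sigma_{G_2})$ of value $1$ on the cut edge. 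It is essential to take a genuine $5$-flow on $(G_2,\sigma_{G_2})$ via the new lemma, so that no edge of $G_2$ carries value $5$ and $\phi^{-1}(5)$ remains inside the single inductive path in $G_1$. Once no $2$-edge-cut meets a negative edge, I form $G^\ast=(V(G),E(G)\cup\{ux\}-\{uv,xy\})$, bridgeless by Observation~\ref{obser:G*}, apply the new lemma to give $G^\ast$ a nowhere-zero $5$-flow with $\phi(ux)=1$, and feed it to Theorem~\ref{thm:general} with $k=5$; its conclusion is exactly the claimed $6$-flow with $\phi(uv)=\phi(xy)=1$, positive everywhere, and a $v$-$y$-path $P$ with $\phi^{-1}(5)\subseteq E(P)$ and $\phi^{-1}(1)\cap E(P)=\emptyset$, contradicting the choice of counterexample.

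I expect the main obstacle to be the lifting lemma, i.e.\ the $\mathbb{Z}_5$-version of Lemma~\ref{magictrick}: one must check that the path-pushing terminates while keeping all values positive and the value on $f$ fixed, which is exactly where bridgelessness of $H$ (equivalently, $f$ not being a bridge) enters essentially. A secondary point requiring care is the gluing across a $2$-edge-cut: using Corollary~\ref{6_Flow_1} there would only give a $6$-flow on the all-positive side and could create value-$5$ edges off the inductive path, so one really must route the argument through the new $5$-flow lemma in order to preserve the single-path property; the remainder is the routine bookkeeping already carried out for the 7-flow theorem.
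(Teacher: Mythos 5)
Your proposal is correct and is essentially the proof the paper has in mind: the paper omits it with the remark that it is ``very similar to the previous one,'' and your argument is exactly that adaptation, with the single genuinely new ingredient (the 5-flow analogue of Corollary~\ref{6_Flow_1}, obtained by scaling a nowhere-zero $\mathbb{Z}_5$-flow in the field $\mathbb{Z}_5$ and lifting it by the path-pushing of Lemma~\ref{magictrick}) correctly identified and correctly proved. Your observation that the all-positive side of a $2$-edge-cut must carry a genuine $5$-flow (value $1$ on the cut edge, forced since the corresponding edge of $G_1$ is negative) so as not to create value-$5$ edges off the inductive path is precisely the bookkeeping the omitted proof requires.
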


A graph $G$ is \textit{cyclically $k$-edge-connected} if there exists no edge-cut $C$ with less than $k$ edges such that $G-C$ has two components that contain a circuit.
The \textit{oddness} $\omega(G)$ of a cubic graph $G$ is the minimum number of odd circuits of a 2-factor of $G$. In \cite{Steffen_2010} it is proven that if the cyclic connectivity of a cubic graph $G$ is at least $\frac{5}{2}\omega(G) - 3$, then $F(G,\texttt{1}) \leq 5$. Since for any nowhere-zero $k$-flow with $k\leq 5$, it is possible to choose a flow value on a particular edge, Theorem~\ref{thm:general} provides the following corollary.

\begin{corollary}
Let $(G,\sigma)$ be a flow-admissible signed cubic graph with $N_{\sigma}=\{uv,xy\}$, and let $G^*$ be an unsigned graph obtained from $(V(G),E(G)\cup\{ux\}-\{uv,xy\})$ by suppressing vertices of degree $2$. If $G^*$ is cyclically $k$-edge-connected and $k \geq \frac{5}{2}\omega(G') - 3$, then $F(G,\sigma) \leq 6$.
\end{corollary}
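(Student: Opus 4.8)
The plan is to derive this corollary directly from the combination of the cited result on cyclic connectivity and Theorem~\ref{thm:general}. First I would invoke the result from \cite{Steffen_2010}: if the cyclic connectivity of a cubic graph is at least $\frac{5}{2}\omega(G') - 3$, then that graph admits a nowhere-zero $5$-flow. Applying this to $G^*$ (which is cubic after suppressing the degree-$2$ vertices) under the hypothesis $k \geq \frac{5}{2}\omega(G') - 3$, we obtain that $G^*$ admits a nowhere-zero $5$-flow. The key extra ingredient is the remark already stated in the excerpt: for any nowhere-zero $j$-flow with $j \leq 5$, one can prescribe the flow value on a single chosen edge. I would use this to arrange that the edge $ux$ (the edge introduced when forming $G^*$ from $(G,\sigma)$) receives flow value $1$.

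Having secured a nowhere-zero $5$-flow on $G^*$ with $\phi^*(ux) = 1$, the second step is to feed this directly into Theorem~\ref{thm:general} with $k = 5$. That theorem immediately yields a nowhere-zero $6$-flow on $(G,\sigma)$, which gives $F(G,\sigma) \leq 6$ as desired. Strictly, Theorem~\ref{thm:general} is stated for $G^*$ defined on the full vertex set without suppression, so one small bookkeeping point is that suppressing degree-$2$ vertices does not change the existence of a nowhere-zero flow (a flow value simply propagates unchanged along a suppressed path), so the $5$-flow on the suppressed $G^*$ lifts back to a $5$-flow on the unsuppressed version with the same value on $ux$. This lets us apply Theorem~\ref{thm:general} verbatim.

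The main obstacle I anticipate is not in the flow construction itself, which is essentially immediate from the two cited results, but rather in reconciling the role of the oddness parameter $\omega(G')$ that appears in the hypothesis. The statement writes $\omega(G')$ rather than $\omega(G^*)$, so I would want to confirm that the intended graph whose oddness controls the bound is precisely the suppressed cubic graph $G^*$ to which the result of \cite{Steffen_2010} is applied; any ambiguity here is merely notational and should be pinned down by identifying $G'$ with $G^*$. Once that identification is clear, the two invocations chain together with no computation, which is exactly why the statement can be left as a short corollary.

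One further point worth checking is the applicability of the prescribed-value remark to the specific edge $ux$: the remark asserts that a flow value can be chosen on a particular edge for any nowhere-zero $j$-flow with $j \leq 5$, and since the $5$-flow guaranteed by \cite{Steffen_2010} falls in this range, no additional edge-connectivity hypothesis is needed beyond what the cyclic connectivity already provides. Thus the complete argument is: obtain the $5$-flow on $G^*$, adjust it so $\phi^*(ux) = 1$, then apply Theorem~\ref{thm:general} with $k = 5$ to conclude $F(G,\sigma) \leq 6$.
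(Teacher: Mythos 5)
Your proposal is correct and follows exactly the route the paper intends: the paper presents this as an immediate consequence of the cited result from \cite{Steffen_2010}, the remark that a flow value can be prescribed on one edge for any nowhere-zero $k$-flow with $k\leq 5$, and Theorem~\ref{thm:general} applied with $k=5$. Your additional remarks about lifting the flow through suppressed degree-$2$ vertices and identifying $G'$ with $G^*$ address genuine (if minor) bookkeeping points that the paper leaves implicit.
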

\medskip

\noindent
{\bf Acknowledgements.} The first author was supported by the project LO1506 of the Czech Ministry of Education, Youth and Sports and by the project
NEXLIZ --- CZ.1.07/2.3.00/30.0038, which is co-financed by the
European Social Fund and the state budget of the Czech Republic. The first author also gratefully acknowledges support from project 14-19503S of the Czech
Science Foundation.

\end{document}